\newcolumntype{R}[2]{%
    >{\adjustbox{angle=#1,lap=\width-(#2)}\bgroup}%
    l%
    <{\egroup}%
}
\definecolor{darkblue}{rgb}{0,0,0.6}
\newtheorem*{rep@theorem}{\rep@title}
\newcommand{\newreptheorem}[2]{%
\newenvironment{rep#1}[1]{%
 \def\rep@title{#2 \ref{##1}}%
 \begin{rep@theorem}}%
 {\end{rep@theorem}}}
\newtheorem{proposition}{Proposition}[section]
\newtheorem{theorem}[proposition]{Theorem}
\newtheorem{corollary}[proposition]{Corollary}
\newtheorem{lemma}[proposition]{Lemma}
\newtheorem{scholium}[proposition]{Scholium}
\newtheorem{thmx}{Theorem}
\theoremstyle{definition}
\newtheorem{definition}[proposition]{Definition}
\theoremstyle{remark}
\newtheorem{remark}[proposition]{Remark}
\newtheorem*{remark*}{Remark}
\numberwithin{equation}{section}
\newcommand{\Diff}{\mathrm{Diff}}
\newcommand{\sign}{\operatorname{sign}}
\newcommand{\N}{\mathbb{N}}
\newcommand{\R}{\mathbb{R}}
\newcommand{\Z}{\mathbb{Z}}
\newcommand{\A}{\mathcal{I}}
\newcommand{\PD}{PD}
\newcommand{\im}{\operatorname{Im}}
\newcommand{\Id}{\operatorname{Id}}
\newcommand{\ol}{\overline}
\newcommand{\wt}{\widetilde}
\newcommand{\sm}{\setminus}
\newcommand{\ks}{\operatorname{ks}}
\newcommand{\CP}{\mathbb{CP}}
\DeclareMathOperator{\Int}{Int}
\DeclareMathOperator{\BTOP}{BTOP}
\DeclareMathOperator{\BB}{B}
\DeclareMathOperator{\TOP}{TOP}
\DeclareMathOperator{\PL}{PL}
\DeclareMathOperator{\BO}{BO}
\DeclareMathOperator{\OO}{O}
\DeclareMathOperator{\pr}{pr}
\DeclareMathOperator{\coker}{coker}
\DeclareMathOperator{\colim}{colim}
\DeclareMathOperator{\std}{std}
\newcommand{\bsm}{\left(\begin{smallmatrix}}
\newcommand{\esm}{\end{smallmatrix}\right)}
\begin{document}
\title{Smoothing 3-manifolds in 5-manifolds}

\author{Michelle Daher}
\address{Mathematics Institute, University of Warwick, United Kingdom}
\email{michelle.daher@warwick.ac.uk}

\author{Mark Powell}
\address{School of Mathematics and Statistics, University of Glasgow, United Kingdom}
\email{mark.powell@glasgow.ac.uk}

\def\subjclassname{\textup{2020} Mathematics Subject Classification}
\expandafter\let\csname subjclassname@1991\endcsname=\subjclassname
\subjclass{
57K10, 
57N35, 
57N70. 
}
\keywords{Smoothing submanifolds, concordance of surfaces. 
}

\begin{abstract}
We show that every locally flat topological embedding of a 3-manifold in a smooth 5-manifold is homotopic, by a small homotopy, to a smooth embedding. We deduce that topologically locally flat concordance implies smooth concordance for smooth surfaces in smooth 4-manifolds.
\end{abstract}

\maketitle

\section{Introduction}

Let $Y^3=Y_1\sqcup \cdots \sqcup Y_m$ be a compact 3-manifold with connected components $Y_i$, and let $N^5$ be a compact, connected, smooth 5-manifold. Note that $Y$ and $N$ are possibly nonorientable and can have nonempty boundary. Since $Y$ is 3-dimensional it admits a unique smooth structure up to isotopy~\cite{Moise52},~\cite{Munkres-smoothing}*{Theorem~6.3},~\cite{WhdJ1961a}*{Corollary 1.18}.

\begin{thmx}\label{thm:main-intro}
Let $f\colon Y\to N$ be a locally flat proper topological embedding that is smooth near~$\partial Y$. Then $f$ is homotopic rel.\ boundary, via an arbitrarily small homotopy, to a smooth embedding.
\end{thmx}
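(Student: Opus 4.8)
The plan is to reduce the problem, via smoothing theory applied to the normal‑bundle neighbourhood of $f(Y)$, to a single $\Z/2$‑valued obstruction class on $Y$, and then to kill that class by an arbitrarily small homotopy of $f$. All homotopies and isotopies produced below are made small by carrying out every construction inside small coordinate charts of $N$ — equivalently, by working over a sufficiently fine smooth handle decomposition of $Y$ relative to a collar of $\partial Y$. \textbf{Step 1.} After an arbitrarily small homotopy rel $\partial Y$ if necessary, I may assume $f(Y)$ carries a normal vector bundle: there is a rank‑$2$ real vector bundle $\pi\colon\nu\to Y$ and a topological embedding $F\colon D\nu\hra N$ of its disk bundle onto a closed neighbourhood of $f(Y)$, restricting to $f$ on the zero section and to a smooth embedding over $\partial_0:=\pi^{-1}(\partial Y)\cap D\nu$ (where $\nu$ is the smooth normal bundle of $f$). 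This uses that a locally flat codimension‑$2$ submanifold admits a normal microbundle and that, over the $3$‑dimensional base $Y$, such a microbundle admits a vector‑bundle reduction because $\TOP(2)\simeq\OO(2)$. The compact $5$‑manifold $D\nu$ now carries two smooth structures agreeing near $\partial_0$: the standard one $\sigma_0$, from the smooth vector bundle $\nu$ over the smooth $Y$, and $\sigma_1:=F^{*}\sigma_N$.

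\textbf{Step 2.} If $\sigma_0$ and $\sigma_1$ are concordant rel a neighbourhood of $\partial_0$, the theorem follows: by the Kirby--Siebenmann concordance‑implies‑isotopy theorem, valid here as $\dim D\nu=5$, there is a diffeomorphism $h\colon(D\nu,\sigma_0)\to(D\nu,\sigma_1)$ isotopic rel $\partial_0$ to the identity homeomorphism, so $F\circ h$ smoothly parametrises a neighbourhood of $f(Y)$ by the standardly smoothed $D\nu$ and $(F\circ h)|_{Y}$ is a smooth embedding homotopic to $f$ rel $\partial Y$ (and small, by the fineness arranged above). So it suffices, after a further small homotopy of $f$, to make $\sigma_0$ and $\sigma_1$ concordant rel $\partial_0$. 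Two smoothings of a $5$‑manifold agreeing near a codimension‑$0$ part of the boundary differ by a class in $[D\nu/\partial_0,\TOP/\OO]$, and since $\TOP/\OO$ is $2$‑connected with $\pi_3(\TOP/\OO)=\Z/2$ and $\pi_4(\TOP/\OO)=\pi_5(\TOP/\OO)=0$, while $D\nu/\partial_0\simeq Y/\partial Y$ has homotopy dimension at most $3$, this group is $H^{3}(Y,\partial Y;\Z/2)$. Write $d(f)\in H^{3}(Y,\partial Y;\Z/2)$ for the resulting difference class; the task is to arrange $d(f)=0$.

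\textbf{Step 3 (the crux).} The class $d(f)$ is assembled from local contributions lying in $H^{3}(D^{3},\partial D^{3};\Z/2)\cong\Z/2$, and a homotopy of $f$ supported in a small ball changes $d(f)$ by exactly such a local contribution; so it suffices to realise the nontrivial local class by an arbitrarily small homotopy of $f$ supported in a ball. That local $\Z/2$ is $[D^{3}\times\R^{2}/(\partial D^{3}\times\R^{2}),\TOP/\OO]=\pi_3(\TOP/\OO)$ — the smoothings rel boundary of the total space of the trivial rank‑$2$ bundle over a $3$‑handle — and its nontrivial element (a ``Kirby--Siebenmann kink'') is represented by a flat topological embedding $k\colon D^{3}\hra\R^{5}$ that coincides with the linear inclusion near $\partial D^{3}$; moreover $k$ is homotopic to the linear inclusion rel $\partial D^{3}$ by an arbitrarily small homotopy, since it may be squeezed into an arbitrarily small sub‑ball and $\R^{5}$ is contractible. (If no such $k$ exists, then $d(f)$ is identically zero and there is nothing to do.) Connect‑summing copies of $k$ into $f$ near balls dual to a mod‑$2$ cycle representing $d(f)$ then kills $d(f)$ after a small homotopy, and Step 2 concludes. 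The heart of the matter is thus the $\pi_3(\TOP/\OO)=\Z/2$ appearing here: that it is the only obstruction in these dimensions, and that a small homotopy absorbs it whereas a topological isotopy — which preserves the difference class — does not. This last point is exactly why the theorem is stated with ``homotopy'' rather than ``isotopy''; the remaining ingredients (normal‑bundle existence in codimension $2$ inside a $5$‑manifold, and the bookkeeping keeping every homotopy small) are routine but need care.
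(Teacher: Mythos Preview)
Your route is genuinely different from the paper's. The paper works in two steps: first it tries to extend the vector-bundle smoothing of $\ol{\nu}M$ across the exterior $W_f$, meets an obstruction $\ks(W_f,\partial W_f)\in H^4(W_f,\partial W_f;\Z/2)$ lying in the span of meridian classes, and kills it by ambient connect-sum with Lashof's nonsmoothable $3$-knot (Section~\ref{section:smoothing-complement}); then it compares the resulting smooth structure $\sigma$ on $N$ with $\std$, represents $\ks(\sigma,\std)\in H^3(N,\partial N;\Z/2)$ by a surface $S$, and uses Sunukjian's ambient surgery on Seifert $3$-manifolds to repair $g(Y)$ near each point of $S\cap g(Y)$ (Section~\ref{section:comparing-with-std}). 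You instead compare the two smoothings directly on the normal tube, getting a single obstruction $d(f)\in H^3(Y,\partial Y;\Z/2)$, and try to kill it by local connect-sums. If this works, it bypasses the paper's Step~2 entirely.

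But your Step~3 has a genuine gap: it rests on a dichotomy neither branch of which you establish. The first branch asserts the existence of a locally flat $k\colon D^3\hookrightarrow\R^5$, linear near $\partial D^3$, with $d(k)=1$. Such a $k$ exists, but it is exactly the one-point delocalisation of Lashof's knot $L\subset S^5$, and showing $d(k)=1$ is equivalent to showing $L$ is not topologically isotopic to any smooth knot---this is a nontrivial theorem (see Section~\ref{section:lashofs-knot} and Lemma~\ref{lemma-ks-of-L-nonzero}), not something one may simply posit. Your escape clause ``if no such $k$ exists, then $d(f)\equiv 0$'' is also not obvious: to justify it you must argue that after isotoping $\sigma_0$ to $\sigma_1$ over the $2$-skeleton of $Y$, the obstruction over each $3$-handle is $d$ of some local embedding $D^3\hookrightarrow\R^5$ that is \emph{standard} near $\partial D^3$; this reduction needs smooth unknotting of $S^2$ in $\R^5$ (Haefliger), which you do not invoke. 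Either branch is completable, but as written the crux of the proof is an assertion. There is also a minor technical slip in Step~2: smoothing theory for $5$-manifolds requires the two structures to agree on the \emph{entire} boundary, and $\partial(D\nu)=\partial_0\cup S\nu$ with no reason for $\sigma_0$ and $\sigma_1$ to agree along $S\nu$; you should run the comparison on the open total space $\nu$, whose only boundary is $\nu|_{\partial Y}$.
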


Here \emph{proper} means that $f^{-1}(\partial N) = \partial Y$.
It is not possible in general to isotope $f$ to a smooth embedding, so the homotopy in the theorem is necessary. For instance, Lashof~\cite{Lashof} constructed a locally flat knot $L \cong S^3 \subseteq S^5$ that is not isotopic, in fact not even concordant, to any smooth knot.
We will make crucial use of Lashof's knot in our proof of \cref{thm:main-intro}.

In the rest of the introduction, we explain an application to concordance of surfaces, then we compare with the situation for codimension two embeddings in other dimensions and categories, before finally outlining our proof of \cref{thm:main-intro}.

\subsection{Topological concordance implies smooth concordance for surfaces in 4-manifolds}
Let $\Sigma$ be a closed, smooth surface, possibly disconnected, and possibly nonorientable.  We consider a smooth, closed, connected 4-manifold $X$, again possibly nonorientable, and two smooth submanifolds~$\Sigma_0$ and~$\Sigma_1$ in $X$ with $\Sigma_0 \cong \Sigma \cong \Sigma_1$.

\begin{definition}
We say that $\Sigma_0$ and $\Sigma_1$ are \emph{topologically concordant} (respectively \emph{smoothly concordant}) if there is a locally flat (respectively smooth) submanifold $C \cong \Sigma \times I$,  properly embedded in~$X \times I$, whose intersection with $X \times \{0,1\}$ is precisely $\Sigma_0 \subseteq X \times \{0\}$ and $\Sigma_1 \subseteq X \times \{1\}$. We call~$C$ a \emph{topological concordance} (respectively \emph{smooth concordance}).
\end{definition}

\begin{corollary}\label{corollary:concordance}
 Suppose that $C$ is a topological concordance between $\Sigma_0 \subseteq X\times \{0\}$ and $\Sigma_1 \subseteq X \times \{1\}$.  Then the inclusion map $C \to X \times I$ is homotopic rel.\ $\Sigma_0 \cup \Sigma_1$, via an arbitrarily small homotopy, to an embedding whose image is a smooth concordance between $\Sigma_0$ and $\Sigma_1$.
\end{corollary}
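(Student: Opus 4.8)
The plan is to apply Theorem~\ref{thm:main-intro} to the inclusion $\iota\colon C\hookrightarrow X\times I$. Since $X$ is closed and connected, $X\times I$ is a compact connected smooth $5$-manifold with $\partial(X\times I)=X\times\{0,1\}$, and $C$ is a compact $3$-manifold (possibly disconnected and nonorientable, which is permitted). The inclusion $\iota$ is a locally flat topological embedding, and it is proper because $C\cap\partial(X\times I)=\Sigma_0\sqcup\Sigma_1=\partial C$. The only hypothesis of Theorem~\ref{thm:main-intro} not immediately available is that $\iota$ be smooth near $\partial C$: the concordance $C$ is only assumed locally flat, even along $\Sigma_0$ and $\Sigma_1$. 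Arranging this is the first step.

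\emph{Step 1 (smoothing near the boundary).} Fix a smooth collar $c\colon\partial(X\times I)\times[0,1)\hookrightarrow X\times I$. Because $C$ is locally flat at the points of $\partial C=C\cap\partial(X\times I)$ and $\partial C$ is a smooth, hence locally flat, submanifold of $\partial(X\times I)$, the collaring theorem for locally flat, properly embedded submanifold pairs provides an open neighbourhood $U$ of $\partial(X\times I)$ and a homeomorphism $\Phi\colon\partial(X\times I)\times[0,1)\xrightarrow{\cong}U$ that is the identity on $\partial(X\times I)\times\{0\}$ and carries $\partial C\times[0,1)$ onto $C\cap U$. Both $\Phi$ and $c$ are collars of $\partial(X\times I)$ in $X\times I$, so by uniqueness of collars there is an ambient isotopy $(G_s)_{s\in[0,1]}$ of $X\times I$ with $G_0=\mathrm{id}$, fixing $\partial(X\times I)$ pointwise, supported in an arbitrarily small neighbourhood of $\partial(X\times I)$, and with $G_1\circ\Phi=c$ near $\partial(X\times I)\times\{0\}$. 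Since $\partial C$ is a smooth submanifold of $\partial(X\times I)$ and $c$ is a smooth embedding, $c(\partial C\times[0,1))$ is a smooth submanifold of $X\times I$, and near $\partial(X\times I)$ the homeomorphism $G_1$ carries $C$ onto it. Hence $C':=G_1(C)$ is a locally flat submanifold, still meeting $X\times\{0,1\}$ in $\Sigma_0\sqcup\Sigma_1$, that is \emph{smooth} in a neighbourhood of $\partial(X\times I)$. Write $\phi:=G_1|_C\colon C\xrightarrow{\cong}C'$, which is the identity on $\partial C$; then $(G_s|_C)_{s\in[0,1]}$ is an arbitrarily small homotopy rel.\ $\partial C$ from $\iota$ to $\iota'\circ\phi$, where $\iota'\colon C'\hookrightarrow X\times I$ denotes the inclusion.

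\emph{Step 2 (applying Theorem~\ref{thm:main-intro} and assembling).} The embedding $\iota'$ now meets all the hypotheses of Theorem~\ref{thm:main-intro}, which yields an arbitrarily small homotopy rel.\ $\partial C'$ from $\iota'$ to a smooth embedding $g'\colon C'\to X\times I$; since $\iota'$ is already smooth near $\partial C'$, this homotopy may be taken to fix a neighbourhood of $\partial C'$, so $g'$ agrees with $\iota'$ there and is in particular proper. Consequently $g'(C')$ is a smoothly and properly embedded submanifold with $g'(C')\cap(X\times\{0,1\})=g'(\partial C')=\Sigma_0\sqcup\Sigma_1$, and the smooth manifold $g'(C')$ is diffeomorphic to $C'$, which, being a $3$-manifold homeomorphic to $\Sigma\times I$, is diffeomorphic to $\Sigma\times I$; thus $g'(C')$ is a smooth concordance between $\Sigma_0$ and $\Sigma_1$. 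Finally set $g:=g'\circ\phi\colon C\to X\times I$, an embedding whose image is the smooth concordance $g'(C')$. Concatenating $(G_s|_C)_{s}$ with the homotopy from Theorem~\ref{thm:main-intro} precomposed with $\phi$ produces an arbitrarily small homotopy rel.\ $\Sigma_0\cup\Sigma_1=\partial C$ from the inclusion $C\to X\times I$ to $g$, which is what we want.

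The substantive content is entirely in Theorem~\ref{thm:main-intro}; the rest is bookkeeping. The only real obstacle is the behaviour at the boundary: upgrading ``locally flat with smooth boundary'' to ``smooth near the boundary'' in Step~1 by straightening a collar, and ensuring, as noted in Step~2, that the homotopy supplied by Theorem~\ref{thm:main-intro} can be localised away from $\partial C'$, so that $g'$ remains proper and its image is genuinely a concordance.
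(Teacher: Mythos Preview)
Your proof is correct and follows the paper's approach---apply Theorem~\ref{thm:main-intro} with $Y=C\cong\Sigma\times I$ and $N=X\times I$---though the paper does this in a single line. You are right to notice that the hypothesis ``smooth near $\partial Y$'' is not immediate for a merely locally flat concordance, and your collar-straightening in Step~1 (together with the observation, justified by the remark at the end of Section~\ref{section:comparing-with-std}, that the homotopy from Theorem~\ref{thm:main-intro} is supported away from $\partial N$) correctly fills in the detail the paper elides.
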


This follows immediately from ~\cref{thm:main-intro} by taking $Y = \Sigma \times I$, $N= X \times I$, and $f \colon Y \to N$ to be an embedding with $C=f(Y)$.

Special cases of \cref{corollary:concordance} were known before.
First, Kervaire \cite{Kervaire-slice-knots} proved that every 2-knot is slice. This holds in both categories, from which it follows that smooth and topological concordance coincide for 2-knots.
Sunukjian~\cite{Sunuk} proved more generally that homologous connected surfaces in a simply-connected 4-manifold $X$ are both smoothly and topologically concordant. Again, it follows immediately that smooth and topological concordance coincide.
Similarly Cha-Kim~\cite{Cha-Kim}*{Corollary~J} proved that smooth and topological concordance coincide for smoothly embedded spheres with a common smoothly embedded geometrically dual framed sphere.

Work defining surface concordance obstructions includes \cites{Stong-uniqueness, Schwartz-I, Klug-Miller, ST-FQ, AMY}. Other than in \cite{Stong-uniqueness}, the authors restricted to the smooth category. Our result implies that one automatically obtains topological concordance obstructions.

\subsection{Comparison with other dimensions}

We start with low dimensions.
In dimension 3, every locally flat embedding $Y^1 \subseteq N^3$ is isotopic to a smooth embedding. On the other hand, in dimension 4, the existence of topologically slice knots that are not smoothly slice implies the existence of a locally flat embedding $D^2 \hookrightarrow D^4$ that is not even homotopic rel.\ boundary to a smooth embedding. There are also examples of closed locally flat surfaces in closed 4-manifolds, in particular in $S^2 \times S^2$, $\CP^2$, and $S^2 \wt{\times} S^2$, that cannot be smoothed up to homotopy~\cites{Kuga, Rudolph, Luo, LW-90}.
We deal with dimension 5 in this article. The analogue of \cref{thm:main-intro} for locally flat embeddings of smooth 4-manifolds embedded in smooth 6-manifolds is open, and we intend to investigate it in future work.

Now we discuss high dimensions.
For codimension 2 proper embeddings $f \colon Y^m \to N^{m+2}$, when the dimension $m$ of $Y$ is greater or equal to 5, Schultz~\cite{SchultzSmoothableSO} proved the following cf.\ \cite{Lashof-Rothenberg}.

\begin{theorem}[Schultz]\label{thm:Schultz1}
Let $m \geq 5$ and $n>m$.  Let $N^n$ be a smooth compact $n$-manifold, and let~$Y^m$ be a compact topological manifold equipped with a smooth structure near $\partial Y$.
Let $f\colon Y\to N$ be a locally flat proper topological embedding that is smooth near~$\partial Y$.
Then there is a smooth structure on $Y$, extending the given smooth structure on $\partial Y$, such that $f$ is isotopic rel.\ boundary to a smooth embedding if and only if~$Y$ has a topological vector bundle neighbourhood.
\end{theorem}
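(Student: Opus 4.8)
For the ``only if'' direction the argument is routine: a smooth embedding of a smooth manifold has a smooth tubular neighbourhood, and transporting it back along the isotopy (via topological isotopy extension) yields a topological vector bundle neighbourhood of $f(Y)$. For the converse I would argue as follows. Let $\nu\colon U\to Y$ be the given topological $\mathbb{R}^{q}$-bundle, $q=n-m$, with total space an open neighbourhood $U$ of $f(Y)$ in $N$ and zero section $f$; after shrinking I may assume that over a collar of $\partial Y$ the bundle $\nu$ is the smooth tubular neighbourhood of $f|_{\partial Y}$. Being open in $N$, the space $U$ inherits a smooth structure $\sigma$. The plan is to reduce the theorem to producing: a smooth structure $\Sigma$ on $Y$ extending the given one on $\partial Y$; and a smooth vector bundle structure on $\nu\colon U\to(Y,\Sigma)$ with zero section $f$, whose underlying smooth structure $\sigma'$ on $U$ is concordant to $\sigma$ relative to a neighbourhood of $\nu^{-1}(\partial Y)$.

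Granting such $\Sigma$ and $\sigma'$, the Kirby--Siebenmann concordance-implies-isotopy theorem---applicable because $\dim U=n\ge 6$---upgrades the concordance to a diffeomorphism $h\colon(U,\sigma)\to(U,\sigma')$ that is topologically isotopic to $\mathrm{id}_{U}$ rel.\ the neighbourhood of $\nu^{-1}(\partial Y)$ on which $\sigma$ and $\sigma'$ agree. Then $h^{-1}\circ f\colon Y\to(U,\sigma)\subseteq N$ is a smooth embedding of $(Y,\Sigma)$ which equals $f$ near $\partial Y$, and $t\mapsto h_{t}^{-1}\circ f$ is an isotopy of embeddings, rel.\ $\partial Y$, from $f$ to it; since $Y$ is compact, topological isotopy extension turns this into an ambient isotopy of $N$ rel.\ $\partial N$, giving the theorem.

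It remains to produce $\Sigma$ and $\sigma'$, which is the crux. First I would smooth the normal bundle $\nu$: when $n=m+2$ this is automatic since $\mathrm{TOP}(2)\simeq\mathrm{O}(2)$, so the classifying map $Y\to\mathrm{BTOP}(2)$ already factors through $\mathrm{BO}(2)$ and the reduction can be kept fixed over the collar of $\partial Y$; in higher codimension the needed input is the connectivity of $\mathrm{TOP}(q)/\mathrm{O}(q)\to\mathrm{TOP}/\mathrm{O}$, which identifies the obstructions with the (vanishing) smoothing obstructions of $N$. Having fixed a smooth reduction of $\nu$, the Whitney-sum relation $\tau_Y\oplus\nu\cong\tau_N|_Y$ for topological tangent microbundles, combined with the smoothness of $N$, shows that $\tau_Y\colon Y\to\mathrm{BTOP}$ lifts to $\mathrm{BO}$; hence by Kirby--Siebenmann smoothing theory (this is where $m\ge 5$ enters) $Y$ is smoothable rel.\ $\partial Y$, and since concordance classes of smoothings of $Y$ rel.\ $\partial Y$ and of $U$ rel.\ a neighbourhood of $\nu^{-1}(\partial Y)$ are both torsors over $[Y,\partial Y;\mathrm{TOP}/\mathrm{O}]$ (as $\nu$ is a homotopy equivalence) and $\Sigma\mapsto\sigma'$ is a map of torsors, hence a bijection, the structure $\Sigma$ can be chosen so that $\sigma'$ lies in the concordance class of $\sigma$. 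The main obstacle I anticipate is exactly this smoothing of the normal topological vector bundle $\nu$ compatibly with the boundary data---immediate in codimension two, but otherwise resting on the connectivity of $\mathrm{TOP}(q)/\mathrm{O}(q)\to\mathrm{TOP}/\mathrm{O}$---together with the bookkeeping of keeping every smoothing-theoretic step relative to $\partial Y$; the remaining steps are formal consequences of the Kirby--Siebenmann package, made available for $Y$ and $U$ by $m\ge 5$ and $n\ge 6$ respectively.
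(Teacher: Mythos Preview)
Your proposal is correct and follows essentially the approach the paper outlines. The paper does not give its own proof of this theorem---it is attributed to Schultz and only sketched in a couple of sentences, noting that a topological vector bundle neighbourhood yields a smooth structure on $Y\times\R^p$, whence the Product Structure Theorem (for $m\ge 5$) smooths $Y$, after which smoothing theory together with the Concordance Implies Isotopy Theorem from \cite[Essay~I]{Kirby-Siebenmann:1977-1} finishes the argument; your detailed sketch is a faithful elaboration of this outline, with Concordance Implies Isotopy applied on the total space $U$ (legitimate since $n\ge 6$) and the Product Structure Theorem encoded in your torsor bijection between smoothings of $Y$ and of $U$.
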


Topological vector bundle neighbourhoods always exist for locally flat codimension 1 and 2 embeddings~\cites{Brown62,KS-normal-bundles-codim-2}, so Schultz~\cite{SchultzSmoothableSO} deduced the following result.

\begin{theorem}[Schultz]\label{thm:Schultz2}
Let $k=1$ or $2$, let $m\geq5$, and let $n=m+k$. Let $N^{n}$ be a smooth compact $n$-manifold,
and let~$Y^m$ be a compact topological manifold equipped with a smooth structure near $\partial Y$.
Let $f\colon Y\to N$ be a locally flat proper topological embedding that is smooth near~$\partial Y$.
Then there is a smooth structure on $Y$, extending the given smooth structure on $\partial Y$, such that $f$ is isotopic rel.\ boundary to a smooth embedding.
\end{theorem}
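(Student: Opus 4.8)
The plan is to reduce \cref{thm:Schultz2} to Schultz's general \cref{thm:Schultz1}. Every standing hypothesis of the latter is immediate from those of the former: we have $m \geq 5$ and $n = m+k > m$ since $k \in \{1,2\}$; the manifold $N$ is smooth and compact; $Y$ is a compact topological manifold carrying a smooth structure near $\partial Y$; and $f$ is a locally flat proper topological embedding, smooth near $\partial Y$. Consequently the only hypothesis of \cref{thm:Schultz1} left to verify is that $f$ admits a \emph{topological vector bundle neighbourhood}: a tubular neighbourhood of $f(Y)$ in $N$ homeomorphic to the total space of a rank-$k$ topological vector bundle over $Y$, restricting over $\partial Y$ to the smooth normal bundle determined by the given smooth structures near the boundary.

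I would establish this in the two codimensions separately. For $k = 1$, a proper locally flat codimension-one submanifold has a normal line bundle: when $f(Y)$ is two-sided this is Brown's bicollaring theorem~\cite{Brown62}, which gives an honest product neighbourhood; the general case is covered by the codimension-one instance of the normal bundle theorems for low-codimension locally flat submanifolds~\cite{KS-normal-bundles-codim-2}. For $k = 2$, the required input is the theorem of Kirby and Siebenmann~\cite{KS-normal-bundles-codim-2} that a locally flat codimension-two submanifold of a topological manifold admits a topological normal microbundle, which in this codimension can be taken to be a genuine rank-two topological vector bundle. In both cases one additionally arranges, by the uniqueness of collars and of tubular neighbourhoods relative to a closed set, that the vector bundle neighbourhood agrees near $\partial Y$ with the smooth one; this is the ``rel.\ $\partial Y$'' refinement.

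Granting the topological vector bundle neighbourhood, \cref{thm:Schultz1} applies directly and outputs a smooth structure on $Y$ extending the given smooth structure on $\partial Y$, together with an isotopy rel.\ $\partial Y$ carrying $f$ to a smooth embedding --- precisely the assertion of \cref{thm:Schultz2}. The substantive mathematics is thus entirely imported, residing in \cref{thm:Schultz1} and in the normal bundle existence results of Brown and of Kirby--Siebenmann, so I do not anticipate a genuine obstacle; the only point demanding care is the bookkeeping of the several ``rel.\ $\partial Y$'' conditions, ensuring that the normal bundle, the resulting smooth structure on $Y$, and the final isotopy are all simultaneously compatible with the prescribed smooth data near $\partial Y$.
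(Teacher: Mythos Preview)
Your proposal is correct and follows exactly the paper's approach: the paper deduces \cref{thm:Schultz2} from \cref{thm:Schultz1} in a single sentence, invoking \cite{Brown62} and \cite{KS-normal-bundles-codim-2} to supply the topological vector bundle neighbourhood in codimensions one and two. Your write-up is in fact more detailed than the paper's, which does not spell out the rel.\ boundary bookkeeping you mention.
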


Note that in the statements of \cref{thm:Schultz1,thm:Schultz2}, $Y$ is not a priori smoothable. The existence of a topological vector bundle neighbourhood for an embedding $f \colon Y\to N$ guarantees a smooth structure on $Y\times\R^p$ for some $p\in \N\setminus\{0\}$. Hence, for $m\geq5$, the Product Structure Theorem~\cite{Kirby-Siebenmann:1977-1}*{Essay~I} implies that $Y$ is smoothable. The proofs of \cref{thm:Schultz1,thm:Schultz2} then proceed by using smoothing theory and the Concordance Implies Isotopy Theorem~\cite{Kirby-Siebenmann:1977-1}*{Essay~I} for smooth structures on $Y$.

The results from \cite{Kirby-Siebenmann:1977-1} do not apply in the same way for $Y^3 \subseteq N^5$. Our approach is rather different.  We fix a smooth structure on a tubular neighbourhood of $f(Y)$ and try to extend it to all of $N$. As we will describe in \cref{subsec:outline-of-proof}, we face obstructions along the way that will require us in general to modify the embedding by a small homotopy to obtain a smooth embedding of $Y$ in $N$.

Returning to the high dimensional case, if one first fixes a smooth structure on $Y^m$, $m \geq 5$, then the induced structure on $Y$ that emerges from Schultz's argument need not be isotopic to the fixed one.
This is a feature of the problem, not a failure of the proof. In fact, if we fix  a smooth structure on $Y$, in general in high dimensions~$f$ is not even homotopic to a smooth embedding.
Kervaire~\cite{Kervaire-higher-dim-knots}*{Appendix,~Theorem~I} (cf.~\cite{Hsiang-Levine-Szczarba} for $d=16$) showed that an exotic $n$-sphere $\Sigma^{n}$ embeds smoothly in $S^{n+2}$ if and only if $\Sigma^n$ bounds a parallelisable $(n+1)$-manifold. Thus by Kervaire-Milnor~\cite{Kervaire-Milnor:1963-1} (see e.g.~\cite{CLM}*{Theorem 12.1} for a succinct statement of the computation of $\theta_n$) an exotic sphere that does not embed exists whenever $\coker (J_n)$ contains a nontrivial element with vanishing Kervaire invariant. For example, this holds for every even $8 \leq n \leq 138$ other than $12$ and $56$~\cite{Behrens-Hill-Hopkins-Mahowald}*{Theorem~1.2}.

Before proceeding to the proof summary, we discuss other related results in the literature.
\begin{enumerate}
    \item
Kirby~\cite{Kirby-smoothing-lf-embeddings} proved that locally flat embeddings are isotopic to smooth embeddings in a range of dimensions, in particular with ambient dimension at least seven and codimension at least four.
\item
Wall~\cite{Wall-locally-flat-PL-smoothing-codim-2}*{Theorem~2} showed, in all dimensions, that every PL locally flat codimension two submanifold is isotopic to a smooth submanifold.
\item
Venema~\cite{Venema-1987} showed for every $n \geq 3$ that every topological embedding of an $(n-2)$-cell in the interior of a $PL$ $n$-manifold can be approximated by a $PL$ embedding. This does not apply to our setting of closed manifolds or proper embeddings, because it does not control the boundary of the cell.
\end{enumerate}

\subsection{Outline of the proof of \texorpdfstring{\cref{thm:main-intro}}{Theorem A}}\label{subsec:outline-of-proof}

For a submanifold $K$ of a manifold $X$ with an open tubular neighbourhood i.e.\ the image of an embedding of a normal bundle, denote the tubular neighbourhood by $\nu K$.  Write $\ol{\nu} K$ for the closure of the tubular neighbourhood of~$K$ in~$X$, which has the structure of a  disc bundle over~$K$. Given a closed subset~$C$ of~$X$,  \emph{a smooth structure on~$C$} will always mean a smooth structure on an open neighbourhood~$U$ of~$C$ in~$X$.

The proof of \cref{thm:main-intro} breaks naturally into two distinct steps, the outlines of which we shall explain next.
For a smooth structure $\sigma$ on a topological manifold~$X$, we write~$X_\sigma$ to specify that~$X$ is equipped with the smooth structure~$\sigma$. In what follows, we will write~$N_{\std}$ for~$N$ equipped with the given smooth structure.

\vspace{1em}

\noindent\textbf{Step 1:}  \textit{We show that $f\colon Y\to N$ is homotopic, by a small homotopy, to a smooth embedding $g\colon Y\to N_{\sigma}$, for some $\sigma$.}

\vspace{1em}

We write $M := f(Y)$ for the image of $f$.
The idea of the proof is to consider a standard smooth structure on $\ol{\nu} M$ and on $\partial N$, and then to try to extend this to all of $N$. We denote the exterior of~$M$ by  $W_f := N \sm \nu M$.  Smoothing theory (recapped in Section~\ref{section:smoothing-theory}) gives a Kirby-Siebenmann obstruction in $H^4(W_f,\partial W_f;\Z/2)$, that vanishes if and only if the smooth structure on $\partial W_f$ extends to all of $W_f$.  It turns out that this obstruction does not always vanish, but that by taking ambient connected sums of $M$ with copies of Lashof's nonsmoothable 3-knot $L \cong S^3 \subseteq S^5$ from~\cite{Lashof}, which we discuss in Section~\ref{section:lashofs-knot}, we can arrange that this obstruction vanishes. Whence $f$ is homotopic, via a small homotopy, to $g \colon Y \to N$ such that  $M' := g(Y)$ is smooth in some smooth structure~$\sigma$ on~$N$, that restricts to the given smooth structure on~$\partial N$.

\vspace{1em}

\noindent\textbf{Step 2:}  \textit{ We show that $g\colon Y\to N_{\sigma}$ is homotopic, via a small homotopy, to a smooth embedding $g'\colon Y\to N_{\std}$.}

\vspace{1em}

Smoothing theory implies that we can arrange for the smooth structure $\sigma$ on $N$ and the given smooth structure $\std$ to agree away from a tubular neighbourhood $\nu S$ of a surface $S \subseteq N$. By transversality we can assume that $g(Y)$ intersects $S$ in finitely many points, in a neighbourhood of which $M':= g(Y)$ is smooth in $\sigma$ but not in $\std$. This reduces the smoothing problem for $M'$ in $\std$ to finitely many local problems, which can be resolved using a proof analogous to Kervaire's proof \cite{Kervaire-slice-knots}*{Th\'{e}or\`{e}me~III.6} that every 2-knot is smoothly slice.  Kervaire's result was generalised by Sunukjian~\cite{Sunuk}, who showed that homologous connected surfaces in 1-connected 4-manifolds are smoothly concordant, and it is Sunukjian's arguments that apply in our situation.

\begin{remark}
  The changes to $f$ in Steps 1 and 2 can be characterised as topological isotopies, together with adding and removing local knots.
\end{remark}

\subsection*{Organisation}
In Section~\ref{section:smoothing-theory} we recap smoothing theory, prove lemmas on properties of the Kirby-Siebenmann invariant, and recall Lashof's nonsmoothable 3-knot. We prove Step 1 in Section~\ref{section:smoothing-complement}, and we prove Step 2 in Section~\ref{section:comparing-with-std}. Then in Section~\ref{section:Jae-choon-suggestions} we give conditions under which smoothing up to isotopy is possible.

\subsection*{Acknowledgements}

MP thanks the participants of a discussion group on surfaces in 4-manifolds in Le Croisic, June 2022, which brought this problem to his attention, and by extension thanks the organisers of this enjoyable conference.
We thank Sander Kupers for his interest and suggesting a citation, we are grateful to Jae Choon Cha for suggesting that we write Section~\ref{section:Jae-choon-suggestions}, and we thank the anonymous referee for helpful suggestions that led to improvements in the exposition and a simplification of the proof of \cref{lemma:finding:Y-s}.

MD was supported by EPSRC New Horizons grant EP/V04821X/2.
MP was partially supported by EPSRC New Investigator grant EP/T028335/2 and EPSRC New Horizons grant EP/V04821X/2.

\section{Smoothing theory}\label{section:smoothing-theory}

In this section we give a brief recap of smoothing theory, and recall the results we will need.
Smoothing theory was developed by Cairns~\cite{Cairns}, Munkres~\cites{Munkres-smoothing, Munkres-smoothing-II,Munkres-smoothing-III,Munkres-smoothing-IV,Munkres-2-sphere}, Milnor~\cites{Milnor-ICM,Milnor-microbundles},   Hirsch~\cite{Hirsch-smoothing}, Hirsch-Mazur~\cite{Hirsch-Mazur}, Lashof-Rothenberg~\cite{Lashof-Rothenberg}, and Cerf~\cites{Cerf-VI,Cerf-I,Cerf-II,Cerf-III,Cerf-IV,Cerf-V}, among others.
 Their goal, which they achieved to a large extent, was to understand which PL manifolds admit smooth structures, and if so how many.  The theory was extended around 1970 by Kirby and Siebenmann~\cite{Kirby-Siebenmann:1977-1} to allow one to start with a topological manifold, provided that one is not trying to understand smooth structures on a 4-manifold.
For the purposes of this article, since we work in dimensions four and five, the smooth and PL categories are interchangeable. Since it is more common nowadays to work in the smooth category, we shall also do so.

\subsection{Recap of smoothing theory}
Let $X$ be a topological $n$-manifold possibly with boundary, let~$C$ be a closed subset of $X$, and let $\sigma$ be a smooth structure on an open neighbourhood $U$ of $C$. Let $V \subseteq U$ be a smaller open neighbourhood of $C$.
Denote the set of isotopy classes of smooth structures on $X$ that agree with $\sigma$ near $C$ by $\mathcal{S}_{\Diff}(X, C,\sigma)$. We write $\BTOP(k)$ for the classifying space for topological $\R^k$ bundles, and $\BO(k)$ for the classifying space for rank~$k$ smooth vector bundles. Define $\BTOP := \colim_k \BTOP(k)$ and $\BO := \colim_k \BO(k)$, the corresponding stable bundle classifying spaces. Consider the following diagram, which is induced by the stable classifying maps of the tangent bundle of a neighbourhood $U$ of $C$ and the stable tangent microbundle of $X$:
\[\begin{tikzcd}
  & \TOP/\OO \ar[d] \\
  V \ar[r,"T_{V}"] \ar[d] & \BO \ar[d] \\
  X \ar[r,"\tau_X"'] \ar[ur,dashed] & \BTOP \ar[d] \\
  & \BB(\TOP/\OO).
\end{tikzcd}\]
Smoothing theory implies that for $n \geq 6$ or $(n=5 \ \mathrm{and}\  \partial X \subseteq C)$, isotopy classes of smooth structures on $X$ correspond to lifts $X\to \BO$ of the map $X \to \BTOP$, relative to the fixed lift on the smaller neighbourhood $V \subseteq U$. The vertical sequence is a  principal fibration, which implies that such a lift exists if and only if the composite $X \to \BTOP \to \BB(\TOP/\OO)$ is null-homotopic, and that homotopy classes of such lifts correspond to $[(X,V),(\TOP/\OO,*)]$ , homotopy classes of maps $X\to \TOP/\OO$ that send $V$ to the base point.

The main result of smoothing theory, applied to 5-manifolds, reads as follows \cite{Kirby-Siebenmann:1977-1}*{Theorem~IV.10.1}.

\begin{theorem}\label{thm:smoothing-thy-main-thm}
  Let $X$ be a $5$-dimensional topological manifold, let $C$ be a closed subset of $X$ with $\partial X\subseteq C$, and fix a smooth structure $\sigma$ on an open neighbourhood $U$ of~$C$.
  \begin{enumerate}[(i)]
    \item\label{item:1-smoothing-thy}   There is an obstruction $\ks(X,C):= \ks(X,C,U,\sigma) \in H^4(X, C;\Z/2)$ that vanishes if and only if $X$ admits a smooth structure extending the given smooth structure on some neighbourhood $V \subseteq U$ of $C$.
    \item\label{item:2-smoothing-thy}  Given two smooth structures $\sigma$ and $\pi$ on $X$ extending the given smooth structure on $U \supseteq C$, there is an obstruction $\ks(\sigma,\pi) \in H^3(X, C;\Z/2)$ that  vanishes if and only if there is a neighbourhood $V \subseteq U$ of $C$ such that $\sigma$ and $\pi$ are isotopic rel.\ $V$, i.e.\ if there is a homeomorphism $f \colon X \to X$ with $f|_{V} = \Id$, such that $f^*(\pi) = \sigma$ and such that $f$ is topologically isotopic rel.~$V$ to $\Id_X$.
    \item\label{item:3-smoothing-thy} The Kirby-Siebenmann obstructions $\ks(X,C)$ and $\ks(\sigma,\pi)$ from \eqref{item:1-smoothing-thy} and \eqref{item:2-smoothing-thy} are natural for restriction to open submanifolds of $X$. More precisely, let $W$ be an open submanifold of $X$ and let $i\colon W\hookrightarrow X$ be the inclusion map. Then $i^*\colon H^4(X,C;\Z/2)\to H^4(W,W\cap C;\Z/2)$ sends $\ks(X,C)$ to $\ks(W,W\cap C)$ and $i^*\colon H^3(X,C;\Z/2)\to H^3(W,W\cap C;\Z/2)$ sends $\ks(\sigma,\pi)$ to $\ks(\sigma|_W,\pi|_W)$.
     \item\label{item:4-smoothing-thy}
         Given a smooth structure on some neighbourhood $V$ of $C$ in $X$, the Kirby-Siebenmann obstruction $\ks(X,C)$ from \eqref{item:1-smoothing-thy} is natural with respect to restriction to a neighbourhood $V'\subseteq V$ of a closed subset $C' \subseteq C$. That is, the inclusion map $H^4(X,C;\Z/2) \to H^4(X,C';\Z/2)$ sends~$\ks(X,C)$ to $\ks(X,C')$.
  \end{enumerate}
\end{theorem}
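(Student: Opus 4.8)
The plan is to derive all four statements from the main theorem of smoothing theory in dimension $\geq 5$ rel.\ boundary, i.e.\ from \cite[Theorem~IV.10.1]{Kirby-Siebenmann:1977-1} and the homotopy-theoretic dictionary recalled just before the statement, together with the standard computation of the low-dimensional homotopy type of $\TOP/\OO$. First I would record that $\TOP/\PL \heq K(\Z/2,3)$ and that $\PL/\OO$ is $6$-connected (with $\pi_7 \cong \Z/28$); feeding these into the fibration sequence $\PL/\OO \to \TOP/\OO \to \TOP/\PL$ shows that $\TOP/\OO$ is $2$-connected with $\pi_3(\TOP/\OO) \cong \Z/2$ and $\pi_i(\TOP/\OO) = 0$ for $4 \leq i \leq 6$, and hence that $\BB(\TOP/\OO)$ is $3$-connected with $\pi_4 \cong \Z/2$ and $\pi_i = 0$ for $5 \leq i \leq 7$. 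In particular the first Postnikov truncations $\TOP/\OO \to K(\Z/2,3)$ and $\BB(\TOP/\OO) \to K(\Z/2,4)$ are at least $7$-connected, which is more than enough for the $5$-dimensional $X$: it forces the lifting and difference sets appearing below to be computed by $H^4(X,-;\Z/2)$ and $H^3(X,-;\Z/2)$.

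For part (i), the dictionary says that lifting $\tau_X$ through $\BO \to \BTOP$ rel.\ a neighbourhood $V \subseteq U$ of $C$ is the same as null-homotoping $X \to \BTOP \to \BB(\TOP/\OO)$ rel.\ $V$; since $\dim X = 5$ and the target is $3$-connected with next homotopy in degree $8$, the only obstruction is a single class in $H^4(X,V;\Z/2)$. Passing to the colimit over neighbourhoods $V$ of $C$ identifies this with $H^4(X,C;\Z/2)$ and produces $\ks(X,C)$, which vanishes exactly when the lift, hence by the cited theorem (applicable as $\dim X = 5$ and $\partial X \subseteq C$) a smooth structure extending $\sigma$ near some $V$, exists. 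For part (ii), the cited theorem identifies isotopy classes of smooth structures on $X$ agreeing with $\sigma$ near $V$ with vertical homotopy classes rel.\ $V$ of lifts of $\tau_X$ to $\BO$, and the latter form a torsor over $[(X,V),(\TOP/\OO,*)] \cong H^3(X,V;\Z/2) \cong H^3(X,C;\Z/2)$; the difference class of the lifts attached to $\sigma$ and $\pi$ is $\ks(\sigma,\pi)$, and it vanishes iff those lifts are vertically homotopic rel.\ a neighbourhood of $C$, iff (using that vertical homotopy of lifts corresponds to concordance of smooth structures, together with Concordance Implies Isotopy in this range) $\sigma$ and $\pi$ are isotopic rel.\ such a neighbourhood in the precise sense stated.

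For parts (iii) and (iv) I would observe that $\ks(X,C)$ and $\ks(\sigma,\pi)$ are respectively the primary obstruction and the primary difference class of the lifting problem for the stable tangent microbundle, and that such classes are natural. For (iii), an open inclusion $i \colon W \hookrightarrow X$ pulls $\tau_X$ back to $\tau_W$ and pulls the germ of $\sigma$ near $C$ back to its germ near $W \cap C$, hence carries the lifting problem over $(X,V)$ to the one over $(W, W \cap V)$; naturality of obstruction and difference classes then gives $i^*\ks(X,C) = \ks(W, W \cap C)$ and $i^*\ks(\sigma,\pi) = \ks(\sigma|_W,\pi|_W)$. For (iv), replacing $C$ by $C' \subseteq C$ and shrinking a neighbourhood $V$ of $C$ to a neighbourhood $V' \subseteq V$ of $C'$ corresponds to the map of pairs $(X,V') \to (X,V)$; the obstruction class is natural for this map, and under the identifications $H^4(X,V;\Z/2) \cong H^4(X,C;\Z/2)$ and $H^4(X,V';\Z/2) \cong H^4(X,C';\Z/2)$ this says exactly that $H^4(X,C;\Z/2) \to H^4(X,C';\Z/2)$ carries $\ks(X,C)$ to $\ks(X,C')$.

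The one place I expect genuine care to be needed, rather than a formal translation of the set-up recalled above, is the bookkeeping that lets one pass between the ``rel.\ a neighbourhood of $C$'' statements produced by the lifting problem and the ``rel.\ $C$'' statements in the theorem: concretely, that $\colim_V H^4(X,V;\Z/2) \cong H^4(X,C;\Z/2)$ and that a smooth-structure germ near $C$ is equivalent data to a compatible family of structures near neighbourhoods of $C$. This is unproblematic for the closed sets $C$ that occur in our applications, which are neighbourhood retracts, and in the stated generality one uses the germ formalism of \cite{Kirby-Siebenmann:1977-1}. A secondary point to verify is that the Concordance Implies Isotopy input used in part (ii) is available in the borderline case $\dim X = 5$ with $\partial X \subseteq C$; it is, by \cite[Essay~I]{Kirby-Siebenmann:1977-1}.
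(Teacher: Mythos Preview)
Your proposal is correct and follows essentially the same approach as the paper. The only organisational difference is that the paper first invokes \cite[Theorem~IV.10.1]{Kirby-Siebenmann:1977-1} for $\PL$ structures using $\TOP/\PL \simeq K(\Z/2,3)$, and then upgrades to smooth structures via the $6$-connectedness of $\PL/\OO$, whereas you feed both facts into the fibration $\PL/\OO \to \TOP/\OO \to \TOP/\PL$ first and then apply the lifting dictionary directly for smooth structures; the underlying homotopy-theoretic content and the treatment of item~(iv) are the same.
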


\begin{proof}
The first three items of the theorem for $\PL$ structures instead of smooth structures follows from \cite{Kirby-Siebenmann:1977-1}*{Theorem~IV.10.1} and the fact that $\TOP/\PL \simeq K(\Z/2,3)$~\cite{Kirby-Siebenmann:1977-1}*{Section~IV.10.12}. However $\PL$ 5-manifolds with smooth boundary admit a unique smooth structure up to isotopy, by smoothing theory and since $\PL/\OO$ is 6-connected~\cites{Hirsch-Mazur, Munkres-2-sphere, Cerf-V, Kervaire-Milnor:1963-1}. Hence it is legitimate to replace $\PL$ structures by smooth structures, as we have done.

The final item can be seen from the following diagram.
\[\begin{tikzcd}[row sep = small]
 V' \ar[r] \ar[dr] & V \ar[r,"T_{V}"] \ar[d] & \BO \ar[d] & \\
 & X \ar[r,"\tau_X"'] & \BTOP  \ar[r] & \BB(\TOP/\OO).
\end{tikzcd}\]
The obstructions $\ks(X,C)$ and the obstructions $\ks(X,C')$ are both  represented by the map $X \xrightarrow{\tau_X} \BTOP \to \BB(\TOP/\OO)$, and the inclusion induced map sends the former to the latter.
\end{proof}

Next we apply \cref{thm:smoothing-thy-main-thm} to deduce a naturality result for the Kirby-Siebenmann obstructions, that will be useful for submanifolds with corners.

Let $K$ be a smooth 5-manifold with corners. We refer the reader to \cite{Wall-diff-topology}*{Section~1.5} for a discussion of smooth manifolds with corners.  Suppose the corner set of $K$, denoted by $\angle K$, separates $\partial K$ into $\partial_1K$ and $\partial_2 K$. Note that $\partial_1K$ and $\partial_2K$ are smooth manifolds with boundary. Fix a smooth structure\footnote{Note that this is a \emph{smooth structure with corners}, which means that it is a maximal atlas in which two charts with corners $(U,\phi)$ and $(V,\theta)$ are smoothly compatible if $\phi\circ\theta^{-1}\colon\theta(U\cap V)\to\phi(U\cap V)$ admits a smooth extension to an open neighbourhood of each point. See~\cite{Lee00}.} $\sigma$ on a neighbourhood $U$ of $\partial K$. By \cite{Wall-diff-topology}*{Proposition~1.5.6}, $U$ contains a smooth embedding of $\partial_1K\times[0,1)$ in $K$. Let $K'$ denote the result of attaching $\partial_1 K\times[0,1)$ to $K$ by the map $h\colon\partial_1K\times\{0\}\to\partial_1K$ given by $h(x,0)=x$, and extend $\sigma|_{\partial_1 K}$ to a product structure along $[0,1)$ in $\partial_1K \times [0,1)$. Denote the resulting smooth structure on $U':=U\cup\partial_1K\times[0,1)$ by $\sigma'$. Note that $U'$ is a smooth manifold with boundary, i.e.\ no corners. Apply the same procedure to $K'$ along $\partial K'$ to obtain an unbounded manifold $K''$ and a smooth structure $\sigma''$ on $U'':=U'\cup\partial K'\times[0,1)$. See \cref{figure:K}.  Let $j\colon K\hookrightarrow K''$ be the inclusion map.

\begin{figure}[!ht]
\begin{center}
\begin{tikzpicture}
\draw (0,0) -- (3,0);
\draw (0,2) -- (3,2);
\draw (0,0) -- (0,2);
\draw (3,0) -- (3,2);
\draw (5,0) -- (8,0);
\draw (5,2) -- (8,2);
\draw (5,0) -- (5,2);
\draw (8,0) -- (8,2);
\draw[red] (5,-0.5) -- (5,0);
\draw[red] (8,-0.5) -- (8,0);
\draw[red] (5,2.5) -- (5,2);
\draw[red] (8,2.5) -- (8,2);
\draw[red, dashed] (5,2.5) -- (8,2.5);
\draw[red, dashed] (5,-0.5) -- (8,-0.5);
\draw (10.5,0) -- (13.5,0);
\draw (10.5,2) -- (13.5,2);
\draw (10.5,0) -- (10.5,2);
\draw (13.5,0) -- (13.5,2);
\draw[red] (10.5,-0.5) -- (10.5,0);
\draw[red] (13.5,-0.5) -- (13.5,0);
\draw[red] (10.5,2.5) -- (10.5,2);
\draw[red] (13.5,2.5) -- (13.5,2);
\draw[red, dashed] (10.5,2.5) -- (13.5,2.5);
\draw[red, dashed] (10.5,-0.5) -- (13.5,-0.5);
\draw[blue, dashed] (10,-0.5) -- (10.5,-0.5);
\draw[blue, dashed] (10,2.5) -- (10.5,2.5);
\draw[blue, dashed] (13.5,2.5) -- (14,2.5);
\draw[blue, dashed] (13.5,-0.5) -- (14,-0.5);
\draw[blue, dashed] (14,2.5) -- (14,-0.5);
\draw[blue, dashed] (10,2.5) -- (10,-0.5);
\draw(1.5,0.25) node{$\partial_1 K$};
\draw(1.5,1.75) node{$\partial_1 K$};
\draw(6.5,0.25) node{$\partial_1 K$};
\draw(6.5,1.75) node{$\partial_1 K$};
\draw(12,0.25) node{$\partial_1 K$};
\draw(12,1.75) node{$\partial_1 K$};
\draw(0.4,1) node{$\partial_2 K$};
\draw(2.6,1) node{$\partial_2 K$};
\draw(5.4,1) node{$\partial_2 K$};
\draw(7.6,1) node{$\partial_2 K$};
\draw(10.9,1) node{$\partial_2 K$};
\draw(13.1,1) node{$\partial_2 K$};
\end{tikzpicture}
\end{center}
\caption{The picture on the left is a schematic of $K$. The middle picture illustrates adding the collar $\partial_1 K\times[0,1)$ to obtain $K'$. The picture on the right illustrates adding $\partial K'\times[0,1)$ to obtain $K''$.}
\label{figure:K}
\end{figure}

\begin{definition}\label{defn:ks-corners}
Let $K$, $K''$, and $j$ be as above. Let $\sigma$ be a smooth structure on a neighbourhood $U$ of $\partial K$. Define the Kirby-Siebenmann obstruction $\ks(K,\partial K)$ as $j^*\ks(K'',\partial K)$ where $\ks(K'',\partial K) = \ks(K'',\partial K,U'',\sigma'')$ is the obstruction to extending the smooth structure $\sigma''$ on $U''$ to $K''$.
\end{definition}

Let $X$ be a smooth 5-manifold with boundary and let $K$ be a smooth 5-manifold with corners that is a submanifold of $X$ such that the corner set of $K$ separates $\partial K$ into $\partial_1K:=K\cap\partial X$ and~$\partial_2 K$ with $\Int \partial_2 K \subseteq \Int X$. (By definition of a submanifold, $\partial_2 K$ intersects $\partial X$ transversely.)
 Consider a smooth structure $\sigma$ on a neighbourhood $U$ of $\partial X\cup\partial K$ such that $\partial_2 K\hookrightarrow U$ is smooth; this condition guarantees the existence of a smooth bicollar neighbourhood of $\partial_2 K$ in $U$, which will be implicitly used in the proof of the next proposition.
\begin{proposition}\label{prop:naturality-corner-5dim}
Let $i\colon(K,\partial K)\hookrightarrow(X,\partial X\cup\partial K)$ be the inclusion.
The induced map \[i^*\colon H^4(X,\partial X\cup\partial K;\Z/2)\to H^4(K,\partial K;\Z/2)\] sends $\ks(X,\partial X\cup\partial K)$ to $\ks(K,\partial K)$.
\end{proposition}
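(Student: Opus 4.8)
The plan is to reduce the statement about manifolds with corners to the corner-free naturality statement in \cref{thm:smoothing-thy-main-thm}\eqref{item:3-smoothing-thy} by "straightening the corners" of both $K$ and $X$ simultaneously and compatibly. Concretely, I would perform the attaching construction preceding \cref{defn:ks-corners} on $K$ to obtain $K''$ with its smooth structure $\sigma''$ on $U''$, but I would do it inside an ambient straightening of $X$. That is, first I would enlarge $X$ to an unbounded smooth 5-manifold $X^\circ$ by attaching an external collar $\partial X \times [0,1)$, extending $\sigma|_{\partial X}$ by a product; since $X$ has no corners this is the standard collar completion. The key point is that the collar $\partial_1 K \times [0,1)$ we attach to $K$, and then the further collar of $\partial K'$, can be chosen to sit inside $X^\circ$ (or a suitable further straightening of it) so that the inclusion $j \colon K \hookrightarrow K''$ is the restriction of an inclusion $J \colon X \hookrightarrow X^\circ$ of open submanifolds, after possibly also attaching external collars along $\partial_2 K$ to deal with the second corner. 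The hypothesis that $\partial_2 K \hookrightarrow U$ is smooth, hence admits a smooth bicollar in $U$, is exactly what is needed to carry the collar-attaching along $\partial_2 K$ out ambiently rather than just intrinsically.

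Once this ambient straightening is set up, the argument is a diagram chase. Write $X''$ for the ambient unbounded manifold obtained by straightening, equipped with a smooth structure $\sigma''_X$ on a neighbourhood of the image of $\partial X \cup \partial K$, arranged so that $K'' \subseteq X''$ as a smooth open submanifold (or at least so that $K''$ sits in $X''$ as an open subset whose relevant neighbourhood data restricts correctly). Then I would invoke \cref{thm:smoothing-thy-main-thm}\eqref{item:3-smoothing-thy} for the inclusion $K'' \hookrightarrow X''$ of open submanifolds: the induced map on $H^4$ sends $\ks(X'', \cdot)$ to $\ks(K'', \partial K)$, where the target is the obstruction of \cref{defn:ks-corners} (up to the identification $j^*$). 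Combining this with the naturality for $J \colon X \hookrightarrow X^\circ$ (again \eqref{item:3-smoothing-thy}, since $X$ is open in $X^\circ$) and the functoriality of restriction maps in cohomology — the square relating $H^4(X'',-) \to H^4(X,-)$ and $H^4(K'',-) \to H^4(K,-)$ commutes because all four maps are induced by inclusions — gives that $i^*$ sends $\ks(X,\partial X \cup \partial K)$ to $j^*\ks(K'',\partial K) = \ks(K,\partial K)$, as desired. Here one also uses \eqref{item:4-smoothing-thy} to identify $\ks(X,\partial X \cup \partial K)$, defined via the given smooth structure on $U$, with the pullback of $\ks(X'', \partial X \cup \partial K)$.

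The main obstacle, and the step requiring genuine care rather than formal manipulation, is showing that the intrinsic corner-straightening of $K$ can be realized \emph{ambiently} inside a straightening of $X$ — i.e.\ that the collars attached to $K$ along $\partial_1 K$ and along $\partial K'$ extend to collars of $X$ along $\partial X$ and along $\partial_2 K$ compatibly, so that $j$ really is a restriction of an inclusion of open submanifolds and the neighbourhoods $U''$, $U''_X$ line up. This is where \cite[Proposition~1.5.6]{Wall-diff-topology} and the existence of the smooth bicollar of $\partial_2 K$ get used: one needs the external collar of $X$ along $\partial X$ to contain the collar $\partial_1 K \times [0,1)$ of $K$, and one needs to straighten the corner of $X$ that appears where $\partial_2 K$ meets $\partial X$ in a way compatible with the straightening of the corner $\angle K$ of $K$. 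Once the ambient model is correctly arranged, everything else is naturality of Kirby-Siebenmann obstructions under inclusions of open submanifolds, which is already in hand from \cref{thm:smoothing-thy-main-thm}.
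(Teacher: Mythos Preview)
Your proposal is correct and follows essentially the same route as the paper: enlarge $X$ to an unbounded manifold $X'$ by attaching an external collar along $\partial X$, realise $K''$ as an open submanifold of $X'$, and then combine the absorbing-the-boundary identification $\ks(X,\partial X\cup\partial K)\leftrightarrow\ks(X',\partial X\cup\partial K)$ with \cref{thm:smoothing-thy-main-thm}\eqref{item:4-smoothing-thy} (to drop $\partial X$ from the relative pair), \cref{thm:smoothing-thy-main-thm}\eqref{item:3-smoothing-thy} (for the open inclusion $K''\hookrightarrow X'$), and \cref{defn:ks-corners} (the $j^*$ step). The only minor divergence is that you hedge about possibly needing a further ambient straightening along $\partial_2 K$, whereas the paper gets by with the single collar $X' = X\cup_{\partial X}\partial X\times[0,1)$: the second external collar $\partial K'\times[0,1)$ needed to form $K''$ is absorbed into $X'$ using the smooth bicollar of $\partial_2 K$ inside $X$ together with the part of $\partial X\times[0,1)$ over $\partial_1 K$, so no extra ambient attachment is required.
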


\begin{proof}
Let $X'$ be the open topological manifold obtained from $X$ by attaching an exterior collar $\partial X \times [0,1)$, where $x$ in $\partial X$ is identified with $(x,0)$ in $\partial X \times [0,1)$. Extend $\sigma$, by taking a product structure along $\partial X \times [0,1)$, to $U' := U \cup \partial X \times [0,1)$, which is a neighbourhood of $\partial X\cup\partial K$ in~$X'$. Let $\sigma'$ be the result smooth structure on $U'$.
Then, by `absorbing the boundary'~\cite{Kirby-Siebenmann:1977-1}*{Proposition~IV.2.1}, this construction determines a natural bijection $\theta\colon \mathcal{S}_{\Diff}(X, \partial X\cup\partial K,\sigma)\to \mathcal{S}_{\Diff}(X', \partial X\cup\partial K,\sigma')$ and it follows from \cite{Kirby-Siebenmann:1977-1}*{Theorem~IV.10.1} and \cite{Kirby-Siebenmann:1977-1}*{Remark~IV.10.2} that $\ks(X,\partial X\cup\partial K) \mapsto \ks(X',\partial X\cup\partial K)$ under the isomorphism on $H^4(-,\partial X\cup\partial K;\Z/2)$ induced by the obvious homotopy equivalence $X' \simeq X$.
Consider the following diagram:
\[\begin{tikzcd}
H^4(X,\partial X\cup\partial K;\Z/2) \ar[rr,"\cong"] \ar[d,"i^*"] && H^4(X',\partial X\cup\partial K;\Z/2) \ar[d,"i_1^*"] \\
H^4(K,\partial K;\Z/2) & H^4(K'',\partial K;\Z/2) \ar[l,"j^*"']  & H^4(X',\partial K;\Z/2). \ar[l,"g^*"']
\end{tikzcd}\]
The map $i^*_1$ is induced by the inclusion map $i_1 \colon (X',\partial K) \to (X',\partial X \cup \partial K)$, $j^*$ is induced by the inclusion $j \colon (K,\partial K) \to (K'',\partial K)$ from \cref{defn:ks-corners},  and $g^*$ is induced by the  inclusion $(K'',\partial K)\hookrightarrow(X',\partial K)$. As per the above discussion, $\ks(X,\partial X\cup\partial K)$ is sent to $\ks(X',\partial X\cup\partial K)$ by the top horizontal map. By \cref{thm:smoothing-thy-main-thm}~\eqref{item:4-smoothing-thy}, $i^*_1\ks(X',\partial X\cup\partial K) = \ks(X',\partial K)$. It follows from \cref{thm:smoothing-thy-main-thm}~\eqref{item:3-smoothing-thy} that $g^*\ks(X',\partial K)=\ks(K'',\partial K)$. Finally by \cref{defn:ks-corners} we have that $j^*\ks(K'',\partial K)=\ks(K,\partial K)$. This concludes the proof that $i^*\ks(X,\partial X\cup\partial K) = \ks(K,\partial K)$.
\end{proof}

In practice, the manifold with corners $K$ will be either a closed tubular neighbourhood $\ol{\nu} f(Y)$ of a locally flat proper embedding $f\colon Y\to N$, or the complement $N \sm \nu f(Y)$. In fact, let $p \colon\ol{\nu} f(Y)\to f(Y)$ be a disc bundle and denote  its boundary sphere bundle by $\Sigma$.
Then $\ol{\nu} f(Y)$ is a smooth manifold with corners (note that it is not necessarily smooth in $N$) and $\angle\ol{\nu} f(Y)=p^{-1}\partial f(Y)\cap\Sigma$ separates $\partial\ol{\nu} f(Y)$ in two parts with closures $p^{-1}f(\partial Y)$ and $\Sigma$.

We also need the following more detailed characterisation of the obstruction $\ks(\sigma,\pi)$.   We write~$X_\sigma$ to denote a topological 5-manifold $X$ equipped with a smooth structure $\sigma$, and let $\pi$ be another smooth structure on $X$ that agrees with $\sigma$ near $\partial X$.

\begin{proposition}\label{prop:sigma-pi-agree-away-from-S}
Suppose that $S \subseteq X$ is a closed surface smoothly embedded in $\Int X_{\sigma}$
whose $\Z/2$-fundamental class is  Poincar\'{e} dual to $\ks(\sigma,\pi) \in H^3(X,\partial X;\Z/2)$. Then there is an arbitrarily small isotopy of~$\sigma$, supported away from $S$ and $\partial X$, to a smooth structure that agrees with $\pi$ on~$X \sm S$.
\end{proposition}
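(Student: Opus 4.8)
The plan is to reduce the statement to the structure of $\TOP/\OO$ in the relevant range of dimensions, together with a Poincaré–Lefschetz duality argument identifying where the two smooth structures can be taken to differ. Recall from the recap of smoothing theory that, since $X$ is a $5$-manifold with $\sigma$ and $\pi$ agreeing near $\partial X$, isotopy classes of smooth structures on $X$ rel.\ a neighbourhood of $\partial X$ are classified by $[(X, \partial X), (\TOP/\OO, *)]$, and that the obstruction $\ks(\sigma, \pi) \in H^3(X, \partial X; \Z/2)$ is the image of the difference class under the map induced by the first nontrivial Postnikov truncation $\TOP/\OO \to K(\Z/2, 3)$. Since we only care about $X$ of dimension $5$, the higher homotopy of $\TOP/\OO$ (the first of which lives in degree $> 4$, as $\TOP/\PL$ is a $K(\Z/2,3)$ and $\PL/\OO$ is $6$-connected) contributes nothing: a map $X \to \TOP/\OO$ from a $5$-complex is determined up to homotopy by its image in $K(\Z/2,3)$. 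Hence the difference class between $\sigma$ and $\pi$ is, up to homotopy of classifying maps, the same as the difference class between $\sigma$ and \emph{any} smooth structure whose associated element of $H^3(X, \partial X; \Z/2)$ equals $\ks(\sigma, \pi)$.

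Next I would produce a concrete model for the difference. The class $\ks(\sigma, \pi) \in H^3(X, \partial X; \Z/2)$ is Poincaré–Lefschetz dual to the $\Z/2$-class $[S] \in H_2(X; \Z/2)$ represented by the given smoothly embedded surface $S \subseteq \Int X_\sigma$. Working in the category of topological manifolds, the standard local construction of exotic smooth structures (the Kirby–Siebenmann "$\Z/2$ worth of smooth structure concentrated along a codimension-$3$ submanifold", realising the generator of $\pi_3(\TOP/\PL) = \Z/2$) applies: there is a smooth structure $\pi'$ on $X$, agreeing with $\sigma$ outside an arbitrarily small tubular neighbourhood $\ol{\nu} S$ of $S$, whose difference class is supported on $\ol{\nu} S$ and represents the dual of $[S]$. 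Concretely, one alters $\sigma$ only inside $\nu S$ by the standard model for the nontrivial element over a $K(\Z/2,3)$, pulled back along $\nu S \simeq S \xrightarrow{} S/\partial \simeq$ (a Thom-type collapse producing the degree-$3$ class). By construction $\ks(\sigma, \pi') = \PD[S] = \ks(\sigma, \pi)$ in $H^3(X, \partial X; \Z/2)$.

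Now combine the two observations. Since $\ks(\sigma, \pi') = \ks(\sigma, \pi)$ and, by the range argument above, $[(X,\partial X),(\TOP/\OO,*)] \to H^3(X, \partial X; \Z/2)$ is a bijection for $\dim X = 5$, the structures $\pi$ and $\pi'$ have the same classifying data rel.\ boundary, hence are isotopic rel.\ a neighbourhood of $\partial X$ by \cref{thm:smoothing-thy-main-thm}\eqref{item:2-smoothing-thy}. Transporting the isotopy, we may replace $\pi$ by $\pi'$: that is, after an isotopy of $\sigma$ supported away from $\partial X$ (and, since $\pi' = \sigma$ off $\nu S$ and we can shrink $\nu S$, supported in an arbitrarily small neighbourhood of $S$, in particular away from $S$ itself once we push off the zero section), $\sigma$ agrees with $\pi$ on $X \sm S$. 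To be careful about the phrase "supported away from $S$": the construction gives $\sigma$ and $\pi'$ literally equal on $X \sm \nu S$, and the exotic change inside $\nu S$ can be taken to fix the zero section $S$ pointwise as well — the nontrivial $\pi_3(\TOP/\OO)$ element is built on the normal $\R^3$-directions — so the resulting isotopy of $\sigma$ to (an isotopic copy of) $\pi$ is supported in $\nu S \sm S$, hence away from $S$ and from $\partial X$, and can be taken arbitrarily small by shrinking $\nu S$.

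The main obstacle I expect is \textbf{the explicit local model in Step 2}: verifying that the standard Kirby–Siebenmann construction of an exotic smoothing concentrated along a codimension-$3$ submanifold $S$ (a) indeed has difference class Poincaré dual to $[S]$ in $H^3(X, \partial X; \Z/2)$ — this is a Thom-isomorphism / naturality computation for the map $\nu S \to K(\Z/2,3)$ classifying the generator of $\pi_3(\TOP/\OO)$ — and (b) can be arranged to fix $S$ pointwise and to be supported in an arbitrarily thin tube, so that the resulting isotopy is genuinely "supported away from $S$." Everything else (the classification statement, the dimension-range vanishing of higher obstructions, the invocation of \cref{thm:smoothing-thy-main-thm}) is formal once this local input is in hand.
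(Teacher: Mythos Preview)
Your approach has a genuine gap in the final ``transport'' step. You correctly observe that $[(X,\partial X),(\TOP/\OO,*)] \cong H^3(X,\partial X;\Z/2)$ in dimension~$5$, and you construct a model structure $\pi'$ agreeing with $\sigma$ off $\nu S$ with $\ks(\sigma,\pi')=\ks(\sigma,\pi)$, so that $\pi$ and $\pi'$ are isotopic rel.\ $\partial X$. But that last isotopy is rel.\ $\partial X$ only: smoothing theory hands you a homeomorphism $h$ isotopic to $\Id_X$ rel.\ $\partial X$ with $h^*(\pi)=\pi'$, and there is no reason $h$ should fix $S$ or be supported anywhere in particular. Your sentence ``since $\pi'=\sigma$ off $\nu S$ \dots\ the resulting isotopy of $\sigma$ \dots\ is supported in $\nu S\setminus S$'' conflates two unrelated things: the region where $\pi'$ and $\sigma$ agree, and the support of the isotopy between $\pi$ and $\pi'$. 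The structures $\pi$ and $\pi'$ can differ everywhere on $\Int X$, and the isotopy relating them need not respect $S$ at all; nor does landing on ``an isotopic copy of $\pi$'' suffice, since the proposition demands literal agreement with $\pi$ on $X\setminus S$. The detour through $\pi'$ does not produce the required isotopy of $\sigma$.

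The paper's argument is more direct and confronts exactly the issue you sidestep. It restricts to the open manifold $X\setminus S$ and shows, by naturality of $\ks$ together with the long exact sequence of the triple $\partial X \subseteq X\setminus S \subseteq X$ and Poincar\'{e}--Lefschetz duality, that $\ks(\sigma|_{X\setminus S},\pi|_{X\setminus S})=0$; hence $\sigma|$ and $\pi|$ are isotopic on $X\setminus S$ rel.\ $\partial X$. The genuinely delicate point --- which your approach neither avoids nor addresses --- is that this isotopy on $X\setminus S$ must extend continuously across $S$ to an isotopy of all of $X$ fixing $S$ pointwise. The paper secures this by going inside the Kirby--Siebenmann chart-by-chart, handle-by-handle construction and choosing the charts covering $X\setminus S$ to have diameters tending to zero as they approach $S$; the isotopy then stays within each chart and therefore extends by the identity over $S$, and is moreover arbitrarily small. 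Your identification of the ``main obstacle'' as the local model for $\pi'$ misses this; the real obstacle is controlling the support of the isotopy near $S$.
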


\begin{proof}
Using the inclusion $X \sm S \to X$ we have a map $H^3(X,\partial X;\Z/2) \to H^3(X \sm S,\partial X;\Z/2)$. By naturality of Kirby-Siebenmann invariants (\cref{thm:smoothing-thy-main-thm}~\eqref{item:3-smoothing-thy}), this sends the Kirby-Siebenmann invariant $\ks(\sigma,\pi)$ to the invariant of the restricted structures $\ks(\sigma|_{X \sm S},\pi|_{X \sm S})$.  We will denote restricted structures by $\sigma| := \sigma|_{X \sm S}$, and similarly for $\pi$, from now on.
 The long exact sequence of the triple $\partial X \subseteq X \sm S \subseteq X$ gives the top row of the following diagram.
\[
\begin{tikzcd}
  H^3(X,X\sm S;\Z/2) \ar[r] \ar[d,"\cong"] & H^3(X,\partial X;\Z/2) \ar[r] \ar[dd,"\cong"] & H^3(X \sm S, \partial X;\Z/2) \\
H^3(\ol{\nu}S,\partial \ol{\nu}S;\Z/2) \ar[d,"\cong"] & & \\
H_2(S;\Z/2) \ar[r] & H_2(X;\Z/2) &
\end{tikzcd}
\]
The vertical isomorphisms are given by combining homotopy invariance of homology, excision, and Poincar\'{e}-Lefschetz duality.
It follows from the diagram that the Poincar\'{e} dual to $[S] \in H_2(X;\Z)$, which by hypothesis equals  $\ks(\sigma,\pi)$, lies in the kernel of the map $H^3(X,\partial X;\Z/2) \to H^3(X \sm S,\partial X;\Z/2)$.  Thus
 $\ks(\sigma|,\pi|) =0 \in H^3(X \sm S,\partial X;\Z/2)$.

 By smoothing theory (Theorem~\ref{thm:smoothing-thy-main-thm}) there is an isotopy of $\sigma|$ to $\pi|$ on $X \sm S$ rel.\ $\partial X$.  That is, we have an isotopy of homeomorphisms $f_t \colon X\sm S \to X \sm S$, where $f_0 = \Id$, $f_t|_{\partial X}=\Id_{\partial X}$, and $f_1^*(\pi|) = \sigma|$.
 To prove the desired result we have to delve into the proof of Theorem~\ref{thm:smoothing-thy-main-thm} a little.
Such an isotopy is constructed chart by chart, and within each chart via a decomposition into handles. Then the handles are smoothed iteratively using \cite{Kirby-Siebenmann:1977-1}*{Theorem~I.3.1}.  Let $d$ be a metric on $X$. We can and shall choose charts $\{U_i\}$ covering $X \sm S$ to be such that if there exists $x \in U_i$ with $d(x,S) < \varepsilon$, then $\operatorname{diam}(U_i) < \varepsilon/10$. We can also make all charts have diameter smaller than an arbitrarily chosen global positive constant.
The construction of $f_t$ guarantees that for all~$i$, if  $x \in U_i$, then $f_t(x) \in U_i$ for all $t \in [0,1]$. It follows from this and the fact that we controlled the size of the charts as they approach $S$ that $f_t$ extends continuously to an isotopy $F_t \colon X \to X$ that fixes $S$ pointwise for all $t \in [0,1]$.   This gives the desired  isotopy of $\sigma$ to a smooth structure $\sigma'$ on $X$ such that $\sigma'|_{X \sm S} = \pi|_{X \sm S}$, i.e.\ they agree away from $S$.  Since we controlled the global size of all charts, we can also arrange for the isotopy to be arbitrarily small.
\end{proof}

\subsection{Lashof's nonsmoothable 3-knot}\label{section:lashofs-knot}

Lashof~\cite{Lashof} constructed a locally flat 3-knot $L \cong S^3 \subseteq S^5$ that is not isotopic to any smooth knot.  As observed by Kwasik and Vogel~\cites{Kwasik-Vogel,Kwasik-nonsmoothable},  Lashof's knot bounds a Seifert 4-manifold $V$ in $S^5$ with $\sign(V)/8 \equiv 1 \mod{2}$. We can use this to explain why $L$ is not smoothable. The proof is as follows.  If $L$ were smoothable, it would bound a smooth Seifert 4-manifold $V'$, which would be spin by naturality of $w_2$, and therefore would satisfy $\sign(V')/8 \equiv 0 \mod{2}$ by Rochlin's theorem. Since the signature of a Seifert 4-manifold is a knot invariant \cite{Levine:1969-1}, we arrive at a contradiction and it follows that $L$ cannot be smoothed. Since the signature is a concordance invariant, it follows also that $L$ is not concordant to any smooth $3$-knot.  Let $E_L := S^5 \sm \nu L$ be the exterior of $L$, and equip $\partial E_L \cong S^3 \times S^1$ with a standard smooth structure.

\begin{lemma}\label{lemma-ks-of-L-nonzero}
  The Kirby-Siebenmann invariant of $E_L$ satisfies \[\ks(E_L,\partial E_L) =1 \in H^4(E_L,\partial E_L;\Z/2) \cong H_1(E_L;\Z/2) \cong \Z/2.\]
\end{lemma}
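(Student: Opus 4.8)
The plan is to compute $\ks(E_L,\partial E_L)$ by relating it to the obstruction to smoothing a Seifert manifold bounded by $L$, and ultimately to the failure of Rochlin's theorem. First I would pin down the groups: by Poincaré–Lefschetz duality $H^4(E_L,\partial E_L;\Z/2)\cong H_1(E_L;\Z/2)$, and since $L\cong S^3$ is a knot in $S^5$, Alexander duality gives $H_1(E_L;\Z)\cong\Z$, so all three groups in the statement are $\Z/2$ as claimed. The content is thus to show the invariant is the nonzero element. I would argue by contradiction: suppose $\ks(E_L,\partial E_L)=0$. Then by \cref{thm:smoothing-thy-main-thm}~\eqref{item:1-smoothing-thy}, the standard smooth structure on $\partial E_L\cong S^3\times S^1$ extends to a smooth structure $\sigma$ on all of $E_L$. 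Gluing back the (smooth, standard) tubular neighbourhood $\ol\nu L\cong S^3\times D^2$ along $\partial E_L$ then produces a smooth structure on $S^5$ restricting to the standard one near $L$; since $S^5$ has a unique smooth structure (and more relevantly, since smoothing theory lets us take this new structure to agree with the standard one away from a surface, then remove that via \cref{prop:sigma-pi-agree-away-from-S}), $L$ becomes a \emph{smooth} knot in (standard) $S^5$ — or at least, it bounds a smooth Seifert $4$-manifold.

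The key step is then to extract the Rochlin contradiction exactly as in the paragraph preceding the lemma: a smooth Seifert surface $V'$ for $L$ is spin (its normal bundle in $S^5$ is trivial, being classified by a map $V'\to S^1$, so $w_2(V')=w_2(TS^5|_{V'})=0$ by Whitney and naturality), hence $\sign(V')\equiv 0 \bmod 16$ by Rochlin's theorem. But the signature of a Seifert $4$-manifold is an invariant of the knot $L$ itself by Levine~\cite{Levine:1969-1}, and Kwasik–Vogel's computation gives $\sign(V)/8\equiv 1\bmod 2$ for \emph{some} Seifert manifold $V$, hence for all of them, contradicting $\sign(V')/8\equiv 0$. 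Therefore $\ks(E_L,\partial E_L)\neq 0$, i.e.\ it equals $1$.

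The main obstacle I anticipate is making the reduction "$\ks=0$ implies $L$ bounds a smooth Seifert manifold" fully rigorous: a priori the extended smooth structure $\sigma$ on $E_L$ need not be the standard one on $S^5$ after regluing, so one must either invoke uniqueness of smooth structures on $S^5$ combined with the corner-smoothing discussion (matching $\sigma$ on $\partial E_L$ with the product structure on $\partial(S^3\times D^2)$, as in the $K''$ construction of \cref{defn:ks-corners}), or — cleaner — argue directly inside $E_L$: a smooth structure on $E_L$ lets one take any locally flat Seifert surface for $L$, push its interior into $E_L$, make it smooth there by transversality/general position in the smooth $5$-manifold $(E_L)_\sigma$, and observe that its normal bundle and hence its spin structure are unaffected by which ambient smooth structure one uses. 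This direct route avoids comparing global smooth structures on $S^5$ and isolates the only nontrivial input, namely Kwasik–Vogel's signature computation together with Rochlin's theorem. Everything else — the duality identifications, naturality of $w_2$, invariance of the Seifert signature — is standard and can be cited.
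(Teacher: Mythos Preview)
Your proposal is correct and follows essentially the same contradiction strategy as the paper: assume $\ks(E_L,\partial E_L)=0$, extend the smooth structure across $E_L$, glue back $\ol\nu L$, and conclude $L$ is smoothable, contradicting Lashof. The paper dispatches your ``main obstacle'' in one line by invoking Kervaire--Milnor (uniqueness of the smooth structure on $S^5$ up to isotopy), so that $L$ is isotopic to a smooth knot in the \emph{standard} $S^5$, and then cites Lashof's nonsmoothability directly rather than re-running the Rochlin/signature argument you spell out (that argument already appears in the paragraph preceding the lemma); your alternative route through a smooth Seifert surface in $(E_L)_\sigma$ also works, but note that you should \emph{construct} such a surface via Pontryagin--Thom/transversality in the smooth structure $\sigma$, not try to smooth a pre-existing locally flat one.
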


\begin{proof}
If $\ks(E_L,\partial E_L)$ were trivial, then by smoothing theory there would be a smooth structure~$\tau$ on $S^5$ extending the standard smooth structure on $\ol{\nu}L$. Thus $L$ would be smooth in $\tau$.  But in fact there is a unique smooth structure on $S^5$ up to isotopy~\cite{Kervaire-Milnor:1963-1}, and hence $L$ would be isotopic to a smooth knot in the standard smooth structure on $S^5$.  Since Lashof proved this is not the case, we deduce that $\ks(E_L,\partial E_L)$ is indeed nontrivial.
\end{proof}

\section{Smoothing the complement of an embedding}\label{section:smoothing-complement}

In this section we prove the following result, which proves Step 1 from \cref{subsec:outline-of-proof}.

\begin{proposition}\label{prop:exotic-embedding}
 Let $N$ be a compact, connected, smooth 5-dimensional manifold with $($possibly empty$)$ boundary, let $Y$ be a compact 3-dimensional manifold with $($possibly empty$)$ boundary, and let $f\colon Y\to N$ be a locally flat proper topological embedding such that~$f$ is smooth near $\partial Y$. Then~$f$ is homotopic rel. boundary, via an arbitrarily small homotopy, to a smooth embedding in some smooth structure $\sigma$ on $N$ that agrees with the given smooth structure on $N$ near $\partial N$.
\end{proposition}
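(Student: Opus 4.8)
The plan is to follow the outline sketched in \cref{subsec:outline-of-proof} for Step 1 precisely, leveraging the smoothing-theory machinery of \cref{section:smoothing-theory} and Lashof's knot from \cref{section:lashofs-knot}. Write $M := f(Y) \subseteq N$, choose a closed tubular neighbourhood $\ol{\nu}M$ (a disc bundle over $M$; this exists for locally flat codimension-two embeddings by \cite{KS-normal-bundles-codim-2}), and fix a standard smooth structure on $\ol{\nu}M$ together with the given smooth structure on $\ol{\nu}(\partial N)$, compatibly where they overlap near $\partial Y$; here we use that $Y$, being 3-dimensional, has a unique smooth structure, so the bundle data can be smoothed. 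Let $W_f := N \sm \nu M$ be the exterior, a smooth 5-manifold with corners whose boundary is $\partial_1 W_f = W_f \cap \partial N$ and $\partial_2 W_f = $ the sphere bundle $\Sigma$ of $\ol{\nu}M$. The smoothing obstruction to extending this structure over all of $N$ is, after applying \cref{prop:naturality-corner-5dim}, governed by $\ks(W_f,\partial W_f) \in H^4(W_f,\partial W_f;\Z/2)$: if this class vanishes then $\sigma$ extends over $N$ and we are done with $g = f$. The content is to show we can \emph{kill} this obstruction by a small homotopy of $f$.

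The key step is the connected-sum construction. Pick a point $p$ in the interior of a top component of $Y$ with $f(p)$ in the interior of $N$, take a small smoothly embedded $5$-ball $B$ around $f(p)$ meeting $M$ in a standard unknotted $3$-disc, and form the ambient connected sum $M \# L$ of $M$ with a copy of Lashof's knot $L \cong S^3 \subseteq S^5$ inside $B$; since $L$ is locally flat and $p$ can be chosen freely, the resulting embedding $f'$ is locally flat, proper, smooth near $\partial Y$, and homotopic to $f$ by an arbitrarily small homotopy (it differs from $f$ only inside $B$, and $L$ is null-homotopic in $S^5$). One then computes how $\ks$ of the exterior changes: the new exterior $W_{f'}$ is obtained from $W_f$ by removing a neighbourhood of a standard disc and gluing in the exterior $E_L$ of Lashof's knot along $S^3 \times S^1$ (more precisely, along a suitable piece of the sphere bundle), and a Mayer--Vietoris / naturality argument using \cref{thm:smoothing-thy-main-thm}~\eqref{item:3-smoothing-thy} shows that $\ks(W_{f'},\partial W_{f'})$ equals $\ks(W_f,\partial W_f)$ plus the image of $\ks(E_L,\partial E_L)$, which by \cref{lemma-ks-of-L-nonzero} is the nonzero class $1 \in H^4(E_L,\partial E_L;\Z/2) \cong \Z/2$ pushed forward into $H^4(W_{f'},\partial W_{f'};\Z/2)$. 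So each connected sum with $L$ toggles the local $\Z/2$-contribution, and after at most one such operation we can arrange $\ks = 0$.

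The main obstacle is making precise the additivity of the Kirby--Siebenmann obstruction under this ambient connected sum, i.e.\ identifying $H^4(W_{f'},\partial W_{f'};\Z/2)$ compatibly with $H^4(W_f,\partial W_f;\Z/2)$ and $H^4(E_L,\partial E_L;\Z/2)$ and checking the obstruction classes add. This requires care with the corner structure of $W_f$ (handled by \cref{defn:ks-corners} and \cref{prop:naturality-corner-5dim}), with the fact that $N$ may be nonorientable and $M$ may have several components (the connected sum should be performed on each component whose local obstruction is nonzero, or one checks that the relevant $H^4$ decomposes as a direct sum indexed by components plus a global part), and with the precise gluing region: the sum $M \# L$ is taken along a $3$-ball, so the exteriors are glued along a $4$-manifold piece $\cong S^3 \times [0,1]$ inside the sphere bundles, and one must verify excision applies. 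Once this additivity lemma is in hand, the proof concludes: having arranged $\ks(W_{f'},\partial W_{f'}) = 0$, \cref{prop:naturality-corner-5dim} and \cref{thm:smoothing-thy-main-thm}~\eqref{item:1-smoothing-thy} give a smooth structure $\sigma$ on $N$ extending the standard structure on $\ol{\nu}M$ and agreeing with the given structure near $\partial N$, so $g := f'$ is a smooth embedding $Y \to N_\sigma$ homotopic rel.\ boundary to $f$ by an arbitrarily small homotopy.
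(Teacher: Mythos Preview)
Your overall strategy matches the paper's: set up the smooth structure on $\ol{\nu}M \cup \partial N$, identify the obstruction $\ks(W_f,\partial W_f)$, and kill it by ambient connected sums with copies of Lashof's knot, using Mayer--Vietoris and naturality to track how $\ks$ changes. The paper carries this out via \cref{prop:ks-lies-in-A} and \cref{prop:-vanishing-ks-for-C-plus-L}.

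There is, however, a genuine gap. You assert that ``each connected sum with $L$ toggles the local $\Z/2$-contribution, and after at most one such operation we can arrange $\ks = 0$''. But $H^4(W_f,\partial W_f;\Z/2) \cong H_1(W_f;\Z/2)$ can be a large $\Z/2$-vector space, and the connected-sum operation only alters $\ks$ by the class Poincar\'{e} dual to a meridian $[\mu_i]$. So you cannot kill an arbitrary element of this group; you must first know that $\ks(W_f,\partial W_f)$ already lies in the subgroup $A$ generated by the classes $\PD^{-1}[\mu_i]$, $i = 1,\dots,m$. This is not formal and is the content of \cref{prop:ks-lies-in-A} in the paper. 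The argument there uses crucially that $N$ is globally smooth, so that $\ks(N,\partial N) = 0$: the long exact sequence of the triple $\partial N \subseteq \partial\ol{\nu}M \cup \partial W_f \subseteq N$, combined with excision, Poincar\'{e}--Lefschetz duality, and the homology computation $H_1(E;\Z/2) \cong \bigoplus_i (H_1(M_i;\Z/2) \oplus B_i)$ of \cref{lemma:homology-computation-boundary}, forces $\PD(\ks(W_f,\partial W_f))$ to be a sum of meridians. Your proposal does not mention this constraint, and without it the toggling argument cannot conclude.

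A smaller point: the gluing region for $W_g = W_f \cup \bigsqcup E_{L_i}$ is a neighbourhood $S^1 \times D^3$ of a meridian inside the sphere bundle $\partial W_f$, not ``$S^3 \times [0,1]$''. This matters for the Mayer--Vietoris computation, since it is precisely the fact that the gluing circle is a meridian both in $W_f$ and in $E_L$ that makes $\ks(W_f,\partial W_f) = \sum_{\A} \PD^{-1}[\mu_i]$ and $\ks(E_{L_i},\partial E_{L_i}) = \PD^{-1}[\mu_{L_i}]$ cancel in $H^4(W_g,\partial W_g;\Z/2)$.
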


Let $Y=Y_1\sqcup \cdots \sqcup Y_m$, where each $Y_i$ is connected.
We write $M:= f(Y)$ and $M_i:= f(Y_i)$.
Use that $f$ is smooth near $\partial Y$ to choose a smooth normal bundle for $M$ near $\partial M$, extending a smooth normal bundle for $\partial M \subseteq \partial N$.
By Kirby-Siebenmann~\cite{KS-normal-bundles-codim-2} and Kister's theorem~\cite{Kister}, this extends to a normal $\R^2$-bundle of $M$ with structure group $\TOP(2)$.
Since $\TOP(2) \simeq \OO(2)$, we may in fact assume that this is a rank two normal vector bundle with structure group $\OO(2)$, and hence has corresponding open and closed unit-disc bundles.
Let $\nu M \subseteq N$ denote the image of the embedding of the normal bundle restricted to the open disc bundle, and let $\ol{\nu}M$ denote the image restricted to the closed disc bundle.
Let $W_f := N \sm \nu M$, $E_i:=\partial\ol{\nu} M_i \cap\partial W_f$, and define $E:= \bigcup_{i=1}^m E_i$; see \cref{figure:N}.

\begin{figure}[!ht]
\begin{center}
\begin{tikzpicture}
\draw (0,2) ellipse (0.75cm and 2cm);
\draw (4,2) ellipse (0.75cm and 2cm);
\draw (0,0) -- (4,0);
\draw (0,4) -- (4,4);
\draw[red] (0,3) ellipse (0.3cm and 0.5cm);
\draw[red] (0,1) ellipse (0.3cm and 0.5cm);
\draw[red] (4,3) ellipse (0.3cm and 0.5cm);
\draw[red] (4,1) ellipse (0.3cm and 0.5cm);
\draw[red, dashed] (0,0.5) -- (4,0.5);
\draw[red, dashed] (0,1.5) -- (4,1.5);
\draw[red, dashed] (0,2.5) -- (4,2.5);
\draw[red, dashed] (0,3.5) -- (4,3.5);
\draw[blue] (0,3) circle (0.02cm);
\draw[blue] (0,1) circle (0.02cm);
\draw[blue] (4,3) circle (0.02cm);
\draw[blue] (4,1) circle (0.02cm);
\draw[blue, dashed] (0,3) -- (4,3);
\draw[blue, dashed] (0,1) -- (4,1);
\put(50,55){\small $W_f$}
\put(70,90){\small $\ol{\nu}M_1$}
\put(70,33){\small $\ol{\nu}M_2$}
\put(25,75){\small\color{blue} $M_1$}
\put(25,17){\small\color{blue} $M_2$}
\put(25,45){\small\color{red}$E$}
\put(-10,55){\small $\partial N$}
\put(110,55){\small $\partial N$}
\end{tikzpicture}
\end{center}
\caption{A schematic diagram of $N$ decomposed as $N= W_f \cup_E \ol{\nu} M$, where $M = f(Y)$, showing the case that $Y=Y_1\sqcup Y_2$ has two connected components with nonempty boundary.}
\label{figure:N}
\end{figure}

We fix a smooth structure on a neighbourhood of $\partial N\cup E=\partial\ol{\nu} M\cup\partial W_f$. To do this, we use that $Y$, as a 3-manifold, admits an essentially unique smooth structure. Since the normal bundle of $M$ has $\OO(2)$ structure group, the total space of the normal bundle inherits a  smooth structure from that of $Y$.  The closed tubular neighbourhood $\ol{\nu} M$, which has the structure of a $D^2$-bundle $\pi \colon \ol{\nu}M \xrightarrow{} M$, therefore has the structure of a  smooth manifold with corners, with the property that $M \hookrightarrow \ol{\nu} M$ is a smooth map. The corner set gives rise to a decomposition \[\partial \ol{\nu} M = E \cup \pi^{-1}(\partial M),\] such that $E$ and $\pi^{-1}(\partial M)$ are smooth 4-manifolds with boundary.
We have a smooth structure on a collar neighbourhood of $\partial N$ in $N$. This smooth structure is compatible with the smooth structure on $\ol{\nu} M$ where they overlap, because we chose the normal bundle of $M$ using that fact that $f$ is smooth near $\partial Y$ with respect to the smooth structure on the collar neighbourhood of~$\partial N$.

Next, we will explain how to choose a topological bicollar neighbourhood of~$E$ in~$N$ together with a smooth structure that is compatible with both the smooth structure on $\ol{\nu}M$ and the given smooth structure on a neighbourhood of $\partial N$. Choose a collar neighbourhood $E \times [0,1)$ of~$E$ in~$\ol{\nu} M$ that is smooth with respect to the smooth structure on~$\ol{\nu}M$.
Then we replace $E$ with the result of pushing it halfway along the collar. Formally, we replace $E$ with $E \times \{1/2\}$, $\ol{\nu}M$ with $\ol{\nu}M \sm (E \times [0,1/2))$, and  $W_f$ with $W_f \cup (E \times [0,1/2])$, without changing notation. Now $E \times (0,1)$ gives a bicollar for $E = E \times \{1/2\}$ with a smooth structure that is compatible with the smooth structures on $\ol{\nu}M$ and near $\partial N$.

Thus we obtain a smooth structure on a neighbourhood
of $\partial N\cup E$. We obtain as well a smooth structure on a neighbourhood of $\partial W_f$ in $W_f$
such that the neighbourhood of $\partial W_f$ is a smooth manifold with corners, with corner set~$\partial\pi^{-1}(\partial M)$.

By \cref{thm:smoothing-thy-main-thm} we therefore have an obstruction \[\ks(N, \partial\ol{\nu}M\cup\partial W_f)\in H^4(N,\partial\ol{\nu}M\cup\partial W_f;\Z/2)\] to extending this smooth structure to all of $N$. It will be shown in \cref{prop:ks-lies-in-A} below that $\ks(N, \partial\ol{\nu}M\cup\partial W_f)$ is determined by \[\ks(\ol{\nu}M, \partial\ol{\nu}M)\in H^4(\ol{\nu}M, \partial\ol{\nu}M;\Z/2) \text{ and } \ks(W_f,\partial W_f)\in H^4(W_f,\partial W_f;\Z/2).\] Since the smooth structure on $E$ was obtained by restricting a structure on~$\ol{\nu} M$, it follows that $\ks(\ol{\nu} M, \partial \ol{\nu} M)=0$. Thus,~$\ks(N, \partial\ol{\nu}M\cup\partial W_f)$ is determined by $\ks(W_f,\partial W_f)$.  If the latter vanishes, then so does $\ks(N, \partial\ol{\nu}M\cup\partial W_f)$.  We also note that $\ks(N,\partial N)=0$, because~$N$ is a smooth manifold.

Our goal will therefore be to modify $f$ by a small homotopy to arrange for $\ks(W_f,\partial W_f)=0$. To begin, we record a homology computation in the next lemma.

\begin{lemma}\label{lemma:homology-computation-boundary}
The homology of $E$ satisfies $H_1(E;\Z/2) \cong \oplus^m_{i=1} (H_1(M_i;\Z/2)\oplus B_i)$, where $B_i$ is a quotient of $\Z/2$, and may depend on~$i$. If $B_i$ is nontrivial then it is generated by a meridian of~$M_i$.
\end{lemma}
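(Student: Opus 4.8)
The plan is to compute $H_1(E;\Z/2)$ one component at a time, using the Gysin sequence. Since the components $M_1,\dots,M_m$ of $M$ have pairwise disjoint tubular neighbourhoods, $E$ is the disjoint union $E_1 \sqcup \cdots \sqcup E_m$, where $\pi$ restricts to the $S^1$-sphere bundle $\pi_i \colon E_i \to M_i$ of the rank-two normal bundle $\nu M_i$. A disjoint union splits homology, so $H_1(E;\Z/2) \cong \bigoplus_{i=1}^{m} H_1(E_i;\Z/2)$, and it suffices to understand a single connected $M_i$.

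Next I would apply the Gysin sequence with $\Z/2$-coefficients to $S^1 \to E_i \xrightarrow{\pi_i} M_i$; this is available for any base and any sphere bundle, since every sphere bundle is $\Z/2$-orientable, so the possible nonorientability of $M_i$ and $\nu M_i$ causes no trouble. The relevant segment is
\[
H_2(M_i;\Z/2) \xrightarrow{\cap e} H_0(M_i;\Z/2) \xrightarrow{\partial} H_1(E_i;\Z/2) \xrightarrow{(\pi_i)_*} H_1(M_i;\Z/2) \longrightarrow 0,
\]
where $e = w_2(\nu M_i) \in H^2(M_i;\Z/2)$ is the mod-two Euler class. Setting $B_i := \coker\bigl(\cap e \colon H_2(M_i;\Z/2) \to H_0(M_i;\Z/2)\bigr)$, connectedness of $M_i$ gives $H_0(M_i;\Z/2) \cong \Z/2$, so $B_i$ is a quotient of $\Z/2$. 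Exactness shows $\partial$ descends to an injection $B_i \hookrightarrow H_1(E_i;\Z/2)$ with image $\ker(\pi_i)_*$, yielding a short exact sequence
\[
0 \longrightarrow B_i \longrightarrow H_1(E_i;\Z/2) \xrightarrow{(\pi_i)_*} H_1(M_i;\Z/2) \longrightarrow 0,
\]
which splits since all three terms are $\Z/2$-vector spaces. Hence $H_1(E_i;\Z/2) \cong H_1(M_i;\Z/2) \oplus B_i$, and summing over $i$ gives the claimed isomorphism.

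It remains to identify a generator of $B_i$ when it is nonzero. The subgroup $B_i = \im\partial \subseteq H_1(E_i;\Z/2)$ is, as always in the Gysin/Serre setup, the image of the fibre inclusion $H_1(S^1;\Z/2) \to H_1(E_i;\Z/2)$; indeed this follows by naturality of the Gysin sequence under restriction of the bundle to a point, where the connecting map $H_0(\mathrm{pt};\Z/2) \to H_1(S^1;\Z/2)$ is an isomorphism. A fibre of $\pi_i$ is the boundary of a normal disc fibre of $\ol{\nu}M_i$, i.e.\ a meridian of $M_i$. Therefore, when $B_i$ is nontrivial, it is generated by the class of a meridian of $M_i$, completing the proof. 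I do not anticipate a real obstacle: the computation is a direct Gysin-sequence argument, with the only point of care being the identification of the connecting homomorphism on $H_0$ with the fibre inclusion --- which is what pins down the new class as a meridian --- together with the observation that $B_i$ genuinely may vary with $i$, vanishing exactly when $\cap e$ is onto, equivalently when the meridian of $M_i$ is $\Z/2$-nullhomologous in $E_i$.
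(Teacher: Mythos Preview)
Your proof is correct and is essentially the same argument as the paper's: the paper runs the Leray--Serre spectral sequence for the circle bundle $S^1 \to E_i \to M_i$ with $\Z/2$ coefficients, obtaining $H_1(E_i;\Z/2) \cong H_1(M_i;\Z/2) \oplus \bigl(\Z/2 / \im(d^2 \colon E^2_{2,0} \to E^2_{0,1})\bigr)$, and your Gysin sequence is exactly this two-row spectral sequence unwound into a long exact sequence, with the differential $d^2$ appearing as cap product with $e = w_2(\nu M_i)$. Your identification of $B_i$ with the image of the fibre inclusion (hence the meridian) is likewise the same as the paper's observation that $B_i$ is a quotient of $E^2_{0,1} = H_0(M_i;H_1(S^1;\Z/2))$.
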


\begin{proof}
 Since $S^1\hookrightarrow E_i\rightarrow M_i$ is a fibration, $M_i$ is path connected, and $\pi_1 (M_i)$ acts trivially on $H_*(S^1;\Z/2)$, we will use the Leray-Serre spectral sequence to compute $H_1(E_i;\Z/2)$. We have
\[E^2_{p,q}\cong H_p(M_i;H_q(S^1;\Z/2)) \cong \begin{cases}
H_p(M_i;\Z/2) & q=0,1 \\
0  & \mathrm{otherwise}.
\end{cases} \]
and $E^3_{p,q}=E^\infty_{p,q}$. Since the coefficient group is a field, the extension problem is trivial and
\begin{align*}
    H_1(E_i;\Z/2)&\cong E^\infty_{1,0}\oplus E^\infty_{0,1}\cong H_1(M_i;\Z/2)\oplus \Z/2/\im(d^2 \colon E^2_{2,0}\rightarrow E^2_{0,1}) \\
    &\cong H_1(M_i;\Z/2)\oplus B_i.
\end{align*}
It follows that $H_1(E;\Z/2) \cong \bigoplus^m_{i=1} H_1(M_i;\Z/2)\oplus B_i$. The $B_i$ are quotients of the terms on the $E^2$-page $H_0(M_i;H_1(S^1;\Z/2)) \cong H_1(S^1;\Z/2)$, and so if $B_i$ is nontrivial it is generated by a meridian to $M_i$, as asserted.
\end{proof}

Whether or not $B_i$ is trivial depends on the differential $d^2$.
It will not be important for our later proofs whether $B_i$ is nontrivial, and so we do not include an investigation of this.

 Let $A \subseteq H^4(W_f,\partial W_f;\Z/2)$ be the subgroup generated by $\{\PD^{-1}[\mu_i]\}_{i=1}^m$, where $[\mu_i] \in H_1(W_f;\Z/2)$ is the class represented by a meridian to $M_i$, and $\PD$ denotes the Poincar\'{e}-Lefschetz duality isomorphism. That is, writing $\iota \colon E \to W_f$ for the inclusion map, $A$ is by definition the subgroup of $H^4(W_f,\partial W_f;\Z/2)$ Poincar\'{e} dual to $\oplus_{i=1}^m \iota(B_i) \subseteq H_1(W_f;\Z/2)$.

\begin{proposition}\label{prop:ks-lies-in-A}
The Kirby-Siebenmann obstruction
$\ks(W_f,\partial W_f) \in H^4(W_f,\partial W_f;\Z/2)$ determines
$\ks(N, \partial\ol{\nu}M\cup\partial W_f)\in H^4(N,\partial\ol{\nu}M\cup\partial W_f;\Z/2)$.
Moreover, $\ks(W_f,\partial W_f)$ lies in the subgroup~$A$.
\end{proposition}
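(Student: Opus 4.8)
The plan is to prove the two assertions separately, in each case by combining excision with naturality of the Kirby--Siebenmann invariant. For the first assertion I would show that restriction induces an isomorphism
\[
H^4(N,\partial\ol{\nu}M\cup\partial W_f;\Z/2)\ \xrightarrow{\ \cong\ }\ H^4(\ol{\nu}M,\partial\ol{\nu}M;\Z/2)\oplus H^4(W_f,\partial W_f;\Z/2)
\]
sending $\ks(N,\partial\ol{\nu}M\cup\partial W_f)$ to $\bigl(\ks(\ol{\nu}M,\partial\ol{\nu}M),\,\ks(W_f,\partial W_f)\bigr)$. The displayed map is an isomorphism because $E$ is bicollared and separates $N$: collapsing first $\partial N$ and then the image of $E$ identifies $N/(\partial\ol{\nu}M\cup\partial W_f)$ with the one-point union $(\ol{\nu}M/\partial\ol{\nu}M)\vee(W_f/\partial W_f)$, whose reduced cohomology splits as a direct sum, with the two wedge inclusions inducing the two restriction maps. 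That $\ks(N,\partial\ol{\nu}M\cup\partial W_f)$ restricts to $\ks(\ol{\nu}M,\partial\ol{\nu}M)$ and to $\ks(W_f,\partial W_f)$ is exactly \cref{prop:naturality-corner-5dim}, applied once with $K=\ol{\nu}M$ and once with $K=W_f$: in both cases the corner set is $\partial E$, the face $\partial_2K$ is $E$, the face $\partial_1K$ is $K\cap\partial N$, and the smooth structure fixed on a neighbourhood of $\partial N\cup E$ makes $E$ smooth, as that proposition requires. Since the smooth structure on $E$ was restricted from one on $\ol{\nu}M$, we have $\ks(\ol{\nu}M,\partial\ol{\nu}M)=0$, and the first assertion follows.

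For the second assertion, the key point is that $N$ is a genuine smooth manifold, so $\ks(N,\partial N)=0$; by \cref{thm:smoothing-thy-main-thm}~\eqref{item:4-smoothing-thy} the map $H^4(N,\partial\ol{\nu}M\cup\partial W_f;\Z/2)\to H^4(N,\partial N;\Z/2)$ then kills $\ks(N,\partial\ol{\nu}M\cup\partial W_f)$. Exactness of the cohomology sequence of the triple $(N,\partial\ol{\nu}M\cup\partial W_f,\partial N)$ produces a class $c\in H^3(\partial\ol{\nu}M\cup\partial W_f,\partial N;\Z/2)$ with $\delta(c)=\ks(N,\partial\ol{\nu}M\cup\partial W_f)$, where $\delta$ is the connecting homomorphism. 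Since $\partial E=E\cap\partial N$, excision identifies $H^3(\partial\ol{\nu}M\cup\partial W_f,\partial N;\Z/2)$ with $H^3(E,\partial E;\Z/2)$, and the analogous excisions identify $H^3(\partial\ol{\nu}M,\pi^{-1}(\partial M);\Z/2)$ and $H^3(\partial W_f,W_f\cap\partial N;\Z/2)$ with the same group; I would view $c$ as an element of $H^3(E,\partial E;\Z/2)$ via these compatible identifications. Naturality of $\delta$ for the maps of triples $(\ol{\nu}M,\partial\ol{\nu}M,\pi^{-1}(\partial M))\to(N,\partial\ol{\nu}M\cup\partial W_f,\partial N)$ and $(W_f,\partial W_f,W_f\cap\partial N)\to(N,\partial\ol{\nu}M\cup\partial W_f,\partial N)$, together with the decomposition above, then shows that the $\ol{\nu}M$- and $W_f$-components of $\ks(N,\partial\ol{\nu}M\cup\partial W_f)$ are $\delta_{\ol{\nu}M}(c)$ and $\delta_{W_f}(c)$, where $\delta_{\ol{\nu}M}$, $\delta_{W_f}$ are the connecting homomorphisms of the triples $(\ol{\nu}M,\partial\ol{\nu}M,\pi^{-1}(\partial M))$ and $(W_f,\partial W_f,W_f\cap\partial N)$.

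It remains to translate this into homology. For a compact manifold $W$ whose boundary is the union of two codimension-zero faces $P$ and $R$ meeting along $\partial R$, the connecting homomorphism $H^k(R,\partial R;\Z/2)\cong H^k(\partial W,P;\Z/2)\to H^{k+1}(W,\partial W;\Z/2)$ is Poincar\'{e} dual to the inclusion-induced map $H_{\dim W-1-k}(R;\Z/2)\to H_{\dim W-1-k}(W;\Z/2)$ --- a standard compatibility of the long exact sequences with Poincar\'{e}--Lefschetz duality. Applying this with $W=\ol{\nu}M$ and with $W=W_f$ (in both cases $R=E$ and $\dim W-1-k=5-1-3=1$), and writing $\gamma\in H_1(E;\Z/2)$ for the Poincar\'{e} dual of $c$, one finds that the $\ol{\nu}M$- and $W_f$-components of $\ks(N,\partial\ol{\nu}M\cup\partial W_f)$ are Poincar\'{e} dual to $\iota_*\gamma\in H_1(\ol{\nu}M;\Z/2)$ and $\iota_*\gamma\in H_1(W_f;\Z/2)$ respectively, where $\iota$ denotes the relevant inclusion of $E$. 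The vanishing of the former forces $\iota_*\gamma=0$ in $H_1(\ol{\nu}M;\Z/2)\cong H_1(M;\Z/2)$. But this composite $H_1(E;\Z/2)\to H_1(M;\Z/2)$ is induced by the sphere-bundle projection $E\to M$, and by \cref{lemma:homology-computation-boundary} and its proof --- an $S^1$-fibre bounds a disc fibre in $\ol{\nu}M$ while the $H_1(M_i;\Z/2)$ summands survive --- its kernel is exactly $\bigoplus_{i=1}^m B_i\subseteq H_1(E;\Z/2)$. Hence $\gamma\in\bigoplus_iB_i$, so $\iota_*\gamma\in\bigoplus_i\iota(B_i)\subseteq H_1(W_f;\Z/2)$, and therefore $\ks(W_f,\partial W_f)=\PD^{-1}(\iota_*\gamma)\in A$.

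The step I expect to be most delicate is the homological bookkeeping in the middle: one must verify that the various excision isomorphisms relating $H^3(\partial\ol{\nu}M\cup\partial W_f,\partial N;\Z/2)$, $H^3(\partial\ol{\nu}M,\pi^{-1}(\partial M);\Z/2)$, $H^3(\partial W_f,W_f\cap\partial N;\Z/2)$ and $H^3(E,\partial E;\Z/2)$ are mutually compatible --- so that a single class $c$ may legitimately be tracked through all of them --- and that the identification of the connecting homomorphisms with inclusion-induced maps under Poincar\'{e}--Lefschetz duality is compatible with these and with the wedge decomposition of the first paragraph. Once that scaffolding is in place, the computation $\ker\bigl(H_1(E;\Z/2)\to H_1(M;\Z/2)\bigr)=\bigoplus_i B_i$ is the short conceptual heart of the argument: it is exactly there that the meridians, and hence the subgroup $A$, make their appearance.
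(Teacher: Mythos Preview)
Your proposal is correct and follows essentially the same approach as the paper. The only cosmetic differences are that the paper obtains the direct-sum decomposition via the relative Mayer--Vietoris sequence (rather than the equivalent wedge/quotient description you give), and packages the second half into a single commutative diagram whose rows are the triple sequence and Mayer--Vietoris and whose columns are excision and Poincar\'e--Lefschetz duality; your use of naturality of $\delta$ for maps of triples, followed by the duality between connecting maps and inclusion-induced maps, unpacks exactly the same diagram chase.
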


\begin{proof}
All homology and cohomology in this proof will be with $\Z/2$ coefficients, and so to save space we omit them from the notation.
Decompose the pair $(N, \partial\ol{\nu}M\cup\partial W_f)$
as \[( \ol{\nu}M, \partial\ol{\nu}M) \cup (W_f,\partial W_f).\]
The intersections are $\ol{\nu}M \cap W_f = E =  \partial\ol{\nu}M \cap \partial W_f$.  Consider the relative cohomology Mayer-Vietoris sequence~\cite{Hatcher}*{p.~204}:
\[\cdots \to H^{n-1}(E,E) \to H^n(N, \partial\ol{\nu}M\cup\partial W_f)\to H^n( \ol{\nu}M, \partial\ol{\nu}M) \oplus H^n(W_f,\partial W_f)\to H^n(E,E)\to\cdots\]
Taking $n=4$ and observing that $H^i(E,E) =0$ for all $i$, we deduce that \[H^4(N, \partial\ol{\nu}M\cup\partial W_f)\cong H^4( \ol{\nu}M, \partial\ol{\nu}M)\oplus H^4(W_f,\partial W_f)\]
where this isomorphism has coordinates the two restrictions to $(\ol{\nu}M, \partial\ol{\nu}M)$ and $(W_f,\partial W_f)$. Therefore, by \cref{prop:naturality-corner-5dim} applied twice, once to the inclusion~$\ol{\nu}M \hookrightarrow N$ and once to the inclusion~$W_f \hookrightarrow N$, this isomorphism sends $\ks(N, \partial\ol{\nu}M\cup\partial W_f) \in H^4(N, \partial\ol{\nu}M\cup\partial W_f)$ to
\begin{align*}
   (\ks(\ol{\nu}M, \partial\ol{\nu}M), \ks(W_f,\partial W_f)) = (0, \ks(W_f,\partial W_f)) \in H^4( \ol{\nu}M, \partial\ol{\nu}M)\oplus H^4(W_f,\partial W_f).
\end{align*}
Here we use that $\ks(\ol{\nu}M, \partial\ol{\nu}M)=0$, which as mentioned above holds because our chosen smooth structure on $\partial\ol{\nu}M$ was obtained by restricting a structure on $\ol{\nu} M$. This proves the first statement of the proposition.

To prove the second sentence, consider the following diagram:
\[\begin{tikzcd}[column sep = tiny]
 H^3(\partial\ol{\nu}M\cup\partial W_f,\partial N) \ar[r] \ar[d,"\cong"] & H^4(N,\partial\ol{\nu}M\cup\partial W_f) \ar[r,"j^*"] \ar[d,"\cong"] & H^4(N,\partial N) \\
H^3(E,\partial E)\ar[r] \ar[d,"\cong"] & H^4( \ol{\nu}M, \partial\ol{\nu}M)\oplus H^4(W_f,\partial W_f) \ar[d,"\cong"] \\
H_1(E) \ar[r,"k"] & H_1(\ol{\nu}M)\oplus H_1(W_f)  &
\end{tikzcd}\]
where the upper row is an excerpt from the cohomology long exact sequence of the triple $\partial N\subseteq \partial\ol{\nu}M\cup\partial W_f\subseteq N$, the top left vertical isomorphism is by excision and the bottom vertical isomorphisms use Poincar\'{e}--Lefschetz duality.
Let $(0, \gamma)\in H_1(\ol{\nu}M)\oplus H_1(W_f)$ be the Poincar\'{e}-Lefschetz dual of
\[(\ks(\ol{\nu}M, \partial\ol{\nu}M), \ks(W_f,\partial W_f))= (0,\ks(W_f,\partial W_f)).\]
Since \[j^*(\ks (N, \partial\ol{\nu}M\cup\partial W_f))=\ks(N, \partial N)=0,\]
by \cref{thm:smoothing-thy-main-thm}~\eqref{item:4-smoothing-thy} and the fact that $N$ is smooth, it follows from exactness of the top row and commutativity of the diagram that $(0,\gamma) \in \im k$.
The map $k \colon H_1(E) \to H_1(\ol{\nu}M) \oplus H_1(W_f)$ is induced by the inclusions $\kappa_1 \colon E \hookrightarrow \ol{\nu}M$ and $\kappa_2 \colon E \hookrightarrow W_f$.
By \cref{lemma:homology-computation-boundary}, \[H_1(E) \cong \oplus^m_{i=1} (H_1(M_i)\oplus B_i) \cong H_1(M) \oplus^m_{i=1} B_i.\]
Let \[(\alpha,\beta_1,\dots,\beta_m) \in H_1(M) \oplus^m_{i=1} B_i\] be such that $k(\alpha,\beta_1,\dots,\beta_m) = (0,\gamma) \in H_1(\ol{\nu}M)\oplus H_1(W_f)$.
Note that $\kappa_1|_{B_i} =0$ and $\kappa_1|_{H_1(M)}$ is an isomorphism. The first of these observations implies that $\kappa_1(\alpha,\beta_1,\dots,\beta_m) = \kappa_1|_{H_1(M)}(\alpha)$.  Since $\kappa_1(\alpha,\beta_1,\dots,\beta_m)=0$, we have that $\kappa_1|_{H_1(M)}(\alpha)=0$. Using that $\kappa_1|_{H_1(M)}$ is an isomorphism, we deduce that $\alpha=0$. Thus $\PD(\ks(W_f,\partial W_f)) =  (0,\beta_1,\dots,\beta_m) \in \im (\oplus_{i=1}^m B_i)$ is a sum of meridians of the connected components $M_i$ of~$M$. It follows that $\ks(W_f,\partial W_f)$ lies in $A$, as desired.
\end{proof}

Let $L \cong S^3 \subseteq S^5$ denote Lashof's non-smoothable 3-knot (see \cref{section:lashofs-knot}).
Write \[\ks(W_f,\partial W_f) = \sum_{i=1}^m a_i (\PD^{-1}[\mu_i]),\] for $a_i \in \Z/2$ defined by this equality and the stipulation that we take $a_i=0$ if $\PD^{-1}[\mu_i]=0$ in $H^4(W_f,\partial W_f;\Z/2)$.
 If $a_i =1$ then we form a connected sum $M_i \# L$ in an arbitrarily small $5$-ball, while if $a_i =0$ we leave $M_i$ alone. Let $g\colon Y\hookrightarrow N$ denote the resulting embedding.
Define \[\A := \{i \mid a_i =1\} \subseteq \{1,\dots,m\}.\]
Let $W_g := N \sm \nu g(Y)$. Note that \[W_g\cong W_f\cup_{\sqcup_{\A} (S^1\times D^3)_i} \bigsqcup_{\A} E_{L_i}.\]
That is, $W_f$ and $\sqcup_i E_{L_i}$ attached along $\sqcup_{\A} (S^1\times D^3)_i$, where $(S^1\times {0})_i$ is identified with a meridian to $M_i$ and a meridian to $L_i$, for each $i \in \A$, and we extend to a tubular neighbourhood $(S^1 \times D^3)_i$ in $\partial W_g$ and $\partial E_{L_i}$ respectively.
Also, note that
\[\partial W_g= \Big(\partial W_f \sm \sqcup_{\A} (S^1 \times \mathring{D}^3)_i\Big) \cup_{\sqcup_{\A} (S^1 \times S^2)_i} \Big( \bigsqcup_{\A} \partial E_{L_i} \sm \sqcup_{\A} (S^1 \times \mathring{D}^3)_i\Big)\]
 Hence,  $\partial W_g\cup(\sqcup_{\A} (S^1\times D^3)_i)$ decomposes as $\partial W_f\cup \bigcup_{\A} \partial E_{L_i}$.
\cref{figure:W-g} shows an illustration of~$W_g$ when one Lashof knot is attached.

\begin{figure}[h!]
\begin{center}
\begin{tikzpicture}
\draw (0,1) -- (0,0) -- (2,0) -- (2,2) -- (0,2) -- (0,1) -- (2,1);
\put(-20,30){\small\color{blue}$W_g$}
\put(25,10){\small\color{red}$W_f$}
\put(25,45){\small\color{red}$E_{L}$}
\put(15,30){\small\color{red}$S^1\times D^3$}
\put(60,10){\small\color{red}$\partial W_f$}
\put(60,40){\small\color{red}$\partial E_L$}
\end{tikzpicture}
\end{center}
\caption{A schematic diagram of $W_g$ when one Lashof knot is attached.}
\label{figure:W-g}
\end{figure}

\begin{proposition}\label{prop:-vanishing-ks-for-C-plus-L}
 We have that $\ks(W_g,\partial W_g) =0 \in H^4(W_g,\partial W_g;\Z/2)$.
\end{proposition}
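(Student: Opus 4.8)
The plan is to compute $\ks(W_g,\partial W_g)$ from the decomposition $W_g \cong W_f \cup_Z \bigsqcup_{i\in\A} E_{L_i}$, where $Z := \bigsqcup_{i\in\A}(S^1\times D^3)_i$ is the union of the gluing regions; each $(S^1\times D^3)_i$ sits in $\partial W_f$ as a tubular neighbourhood of a meridian $\mu_i$ of $M_i$ and in $\partial E_{L_i}$ as a tubular neighbourhood of a meridian of $L_i$. The geometric point is that the obstruction $\ks(W_f,\partial W_f)=\sum_{i\in\A}\PD^{-1}[\mu_i]$ (which is why we attached Lashof knots in the first place) gets cancelled, meridian by meridian, against the nontrivial invariants $\ks(E_{L_i},\partial E_{L_i})=1$ of \cref{lemma-ks-of-L-nonzero}. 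To make this precise I would first arrange, as in the paragraph preceding \cref{prop:ks-lies-in-A}, mutually compatible smooth structures near $\partial W_f$ and near $\bigsqcup_{i\in\A}\partial E_{L_i}$ that agree on the gluing regions; since $\partial W_g\cup Z$ decomposes as $\partial W_f\cup\bigcup_{i\in\A}\partial E_{L_i}$, this produces a smooth structure on a neighbourhood of $\partial W_g\cup Z$ in $W_g$, with $Z$ a bicollared properly embedded $4$-submanifold with $\partial Z\subseteq\partial W_g$. Following \cref{defn:ks-corners} (absorbing the boundary of the manifold with corners $W_g$) this defines an obstruction $\ks(W_g,\partial W_g\cup Z)\in H^4(W_g,\partial W_g\cup Z;\Z/2)$, and by \cref{thm:smoothing-thy-main-thm}~\eqref{item:4-smoothing-thy} the inclusion-induced map $p\colon H^4(W_g,\partial W_g\cup Z;\Z/2)\to H^4(W_g,\partial W_g;\Z/2)$ sends it to $\ks(W_g,\partial W_g)$. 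Hence it suffices to show $\ks(W_g,\partial W_g\cup Z)\in\ker p$.

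Next I would identify $\ks(W_g,\partial W_g\cup Z)$ explicitly. The relative Mayer--Vietoris sequence for $W_g=W_f\cup\bigl(\bigsqcup_{i\in\A}E_{L_i}\bigr)$ with overlap $Z$ has the terms $H^j(Z,Z;\Z/2)=0$ for all $j$ over the intersection, so restriction gives an isomorphism $H^4(W_g,\partial W_g\cup Z;\Z/2)\xrightarrow{\cong}H^4(W_f,\partial W_f;\Z/2)\oplus\bigoplus_{i\in\A}H^4(E_{L_i},\partial E_{L_i};\Z/2)$, exactly as in the proof of \cref{prop:ks-lies-in-A}. By naturality of the Kirby--Siebenmann obstruction under the corner-submanifold inclusions $W_f\hookrightarrow W_g$ and $E_{L_i}\hookrightarrow W_g$ — the analogue of \cref{prop:naturality-corner-5dim} in the present situation, proved in the same way after straightening corners and absorbing boundaries — this isomorphism carries $\ks(W_g,\partial W_g\cup Z)$ to $\bigl(\ks(W_f,\partial W_f),\,(\ks(E_{L_i},\partial E_{L_i}))_{i\in\A}\bigr)$, which by the definition of $\A$ and of the $a_i$, together with \cref{lemma-ks-of-L-nonzero}, equals $\bigl(\sum_{i\in\A}\PD^{-1}[\mu_i],\,(1)_{i\in\A}\bigr)$.

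Finally I would compute $\ker p=\im\delta$, where $\delta$ is the connecting map in the cohomology long exact sequence of the triple $\partial W_g\subseteq\partial W_g\cup Z\subseteq W_g$. By excision, $H^3(\partial W_g\cup Z,\partial W_g;\Z/2)\cong\bigoplus_{i\in\A}H^3(S^1\times D^3,S^1\times S^2;\Z/2)\cong\bigoplus_{i\in\A}\Z/2$, using the Lefschetz duality isomorphism $H^3(S^1\times D^3,S^1\times S^2;\Z/2)\cong H_1(S^1\times D^3;\Z/2)$. Composing $\delta$ with the restriction isomorphism of the previous paragraph and chasing Poincar\'{e}--Lefschetz duality (as in the proofs of \cref{prop:ks-lies-in-A} and \cref{prop:sigma-pi-agree-away-from-S}), the $W_f$-component of $\delta$ is the inclusion-induced map $H_1(Z;\Z/2)\to H_1(W_f;\Z/2)$ sending the core circle of $(S^1\times D^3)_i$ to $[\mu_i]$, and its $E_{L_i}$-component is the map $H_1((S^1\times D^3)_i;\Z/2)\to H_1(E_{L_i};\Z/2)$ sending the core circle (a meridian of $L_i$) to the generator. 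Thus $\delta$ sends the $i$-th generator to $\bigl(\PD^{-1}[\mu_i],\,1\text{ in the $i$-th slot}\bigr)$, so $\delta$ applied to the sum of all generators yields exactly $\bigl(\sum_{i\in\A}\PD^{-1}[\mu_i],\,(1)_{i\in\A}\bigr)=\ks(W_g,\partial W_g\cup Z)$. Therefore $\ks(W_g,\partial W_g\cup Z)\in\im\delta=\ker p$, and so $\ks(W_g,\partial W_g)=p\bigl(\ks(W_g,\partial W_g\cup Z)\bigr)=0$. If $\A=\emptyset$ there is nothing to prove, since then $W_g=W_f$ and all $a_i=0$.

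The step I expect to be the main obstacle is the first one: arranging mutually compatible smooth structures on a neighbourhood of $\partial W_g\cup Z$ and carrying out the manifold-with-corners bookkeeping carefully enough that \cref{defn:ks-corners}, the analogue of \cref{prop:naturality-corner-5dim}, and the Mayer--Vietoris argument all apply. The identification of the connecting map $\delta$ with inclusion-induced maps on first homology via Poincar\'{e}--Lefschetz duality is the computational heart of the argument, but it is routine and parallels the duality-diagram chases already carried out in this section.
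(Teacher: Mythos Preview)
Your proposal is correct and follows essentially the same route as the paper: the paper also passes to $\ks(W_g,\partial W_g\cup Z)$, identifies it with $(\ks(W_f,\partial W_f),\ks(E_{L_i},\partial E_{L_i}))$ via the relative Mayer--Vietoris isomorphism and \cref{prop:naturality-corner-5dim}, and then uses the long exact sequence of the triple $\partial W_g\subseteq\partial W_g\cup Z\subseteq W_g$ together with Poincar\'{e}--Lefschetz duality to see that this element lies in the image of the connecting map (the paper phrases this last step as observing that the bottom row is the homology Mayer--Vietoris sequence and that $\Phi(\sum_{\A}[S^1\times 0]_i)=(\sum_{\A}[\mu_i],\sum_{\A}[\mu_{L_i}])$, which is exactly your computation of $\delta$). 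Your anticipated obstacle is handled by the paper via \cref{defn:ks-corners} and \cref{prop:naturality-corner-5dim}, just as you expect.
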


\begin{proof}
 All homology and cohomology in this proof will be with $\Z/2$ coefficients, and so to save space we omit them from the notation.
 Recall that $\ks(W_f,\partial W_f) = \sum_{\A} \PD^{-1}[\mu_i] \in H^4(W_f,\partial W_f)$.
From the relative cohomology Mayer-Vietoris sequence of the pair \[(W_g,\partial W_g\cup(\sqcup_{\A} (S^1\times D^3)_i))=(W_f, \partial W_f)\cup(\sqcup_{\A} E_{L_i},\sqcup_{\A}\partial E_{L_i}),\] using that $W_f\cap(\sqcup_{\A} E_{L_i})=\partial W_f\cap(\sqcup_{\A}\partial E_{L_i})=\sqcup_{\A} (S^1\times D^3)_i$, we get via an argument similar to that in the proof of \cref{prop:ks-lies-in-A},  that \[H^4 (W_g,\partial W_g\cup(\sqcup_{\A} (S^1\times D^3)_i))\cong H^4(W_f, \partial W_f)\oplus_{\A} H^4 (E_{L_i},\partial E_{L_i}).\]
Hence the image of $\ks(W_g,\partial W_g\cup(\sqcup_{\A} (S^1\times D^3)_i))$ under this isomorphism is \[(\ks(W_f, \partial W_f),\ks (E_{L_1},\partial E_{L_1}),\dots,\ks (E_{L_k},\partial E_{L_k})) \in H^4(W_f, \partial W_f)\oplus_{\A} H^4 (E_{L_i},\partial E_{L_i}).\]
Recall that by \cref{lemma-ks-of-L-nonzero} we have that $\ks (E_{L_i},\partial E_{L_i}) =1$ for each $i \in \A$, represented by~$\PD^{-1}[\mu_{L_i}]$, the Poincar\'{e} dual to a meridian of $L_i$.

Consider the following diagram:
\[\begin{tikzcd}[column sep = tiny]
  H^3(\partial W_g\cup(\sqcup_{\A} (S^1\times D^3))_i,\partial W_g) \ar[r] \ar[d,"\cong"] & H^4(W_g, \partial W_g\cup(\sqcup_{\A} (S^1\times D^3)_i)) \ar[r,"j^*"] \ar[d,"\cong"] & H^4(W_g,\partial W_g)\ar[d,"="] \\
H^3(\sqcup_{\A} (S^1\times D^3)_i,\sqcup_{\A} (S^1\times S^2)_i)\ar[r] \ar[d,"\cong"] & H^4(W_f, \partial W_f)\oplus_{\A} H^4(E_{L_i},\partial E_{L_i})\ar[r]\ar[d,"\cong"] & H^4(W_g,\partial W_g)\ar[d,"\cong"]\\
H_1(\sqcup_{\A} (S^1\times D^3)_i)\cong\oplus_{\A} H_1((S^1\times {0})_i) \ar[r,"{\Phi}"] & H_1(W_f)\oplus_{\A} H_1(E_{L_i})\ar[r] & H_1(W_g) &
\end{tikzcd}\]
The upper row is an excerpt from the cohomology long exact sequence of the triple \[\partial W_g\subseteq \partial W_g\cup(\sqcup_{\A} (S^1\times D^3))_i\subseteq W_g,\] the top left vertical isomorphism is by excision and the bottom vertical isomorphisms use Poincar\'{e}--Lefschetz duality.
By naturality of the Kirby-Siebenmann obstruction (\cref{prop:naturality-corner-5dim} applied twice), the upper middle vertical isomorphism sends \[\kappa := \ks(W_g, \partial W_g\cup(\sqcup_{\A} (S^1\times D^3)_i))\] to  \[\big(\ks(W_f, \partial W_f),\ks (E_{L_1},\partial E_{L_1}),\dots,\ks (E_{L_k},\partial E_{L_k})\big) \in H^4(W_f, \partial W_f)\oplus_{\A} H^4(E_{L_i},\partial E_{L_i}).\]
On the other hand by \cref{thm:smoothing-thy-main-thm}~\eqref{item:4-smoothing-thy},
$j^*(\kappa) = \ks(W_g,\partial W_g) \in H^4(W_g,\partial W_g)$.
By commutativity of the top right square it follows that  \[\big(\ks(W_f, \partial W_f),\ks (E_{L_1},\partial E_{L_1}),\dots,\ks (E_{L_k},\partial E_{L_k})\big)\] maps under the right hand map of the middle row to \[\ks(W_g,\partial W_g) \in H^4(W_g,\partial W_g).\]
By commutativity of the bottom right square of the diagram, the Poincar\'{e}-Lefschetz dual of the former, $(\sum_{\A}[\mu_i],\sum_{\A}[\mu_{L_i}]) \in  H_1(W_f)\oplus_{\A} H_1(E_{L_i})$, is sent to
\[\gamma := \PD\big(\ks (W_g,\partial W_g)\big) \in H_1(W_g),\]
the Poincar\'{e}-Lefschetz dual of $\ks (W_g,\partial W_g)\in H^4(W_g,\partial W_g)$.
Note that the bottom row is the Mayer-Vietoris homology sequence of the decomposition $W_f\cup_{\A} E_{L_i}=W_g$, and for each $i \in \A$ we have $\Phi([S^1\times {0}]_i)=([\mu_i], [\mu_{L_i}])$, where $[S^1\times {0}]_i$ is the generator of $H_1((S^1\times {0})_i)$.
Hence by linearity, \[\Phi\big(\sum_{\A} [S^1\times {0}]_i\big)=\big(\sum_{\A} [\mu_i],\sum_{\A}[\mu_{L_i}]\big).\]
Thus $\gamma=0$ by exactness, and since $\PD$ is an isomorphism it follows that $\ks (W_g,\partial W_g)=0$.
\end{proof}

The main result of this section follows.

\begin{proof}[Proof of Proposition~{\ref {prop:exotic-embedding}}]
Since  $\ks(W_g,\partial W_g) =0$, we can extend the standard smooth structure on $\partial N \cup \ol{\nu} g(Y)$ to all of $N$. Call the resulting smooth structure $\sigma$. By construction, $g(Y)$ is smooth in~$\sigma$, and~$\sigma$ agrees with the given smooth structure of $N$ near $\partial N$.
Each connected sum of~$M_i$ with~$L_i$ can be done arbitrarily close to $M_i=f(Y_i)$, so we can assume that we altered $f$ by an arbitrarily small homotopy.
\end{proof}

\section{Comparing with the standard smooth structure on \texorpdfstring {$N$}{N}}\label{section:comparing-with-std}

Next, we need to compare the smooth structure $\sigma$ we have just constructed with the given smooth structure $\std$ on $N$. The submanifold $g(Y)$ is smooth in $\sigma$, but is a priori not smooth in~$\std$.
 We aim to reduce to a finite collection of local problems, namely neighbourhoods $V_i \subseteq \Int N$ where~$g(Y)$ need not be smooth in~$\std$.  Then we will apply the argument that all 2-knots are smoothly slice \cites{Kervaire-slice-knots,Sunuk} to further modify $g(Y)$ in each of these neighbourhoods $V_i$, replacing~$g(Y) \cap V_i$ with a slice disc for $g(Y) \cap \partial V_i \cong S^2$ that is smooth in the structure $\std$.   Our aim is the following proposition, which proves Step 2 from the introduction.
The combination of Proposition~\ref{prop:exotic-embedding} and Proposition~\ref{prop:smooth-embedding-body} proves Theorem~\ref{thm:main-intro}.

\begin{proposition}\label{prop:smooth-embedding-body}
 Let $N$ be a compact, connected, smooth 5-dimensional manifold with $($possibly empty$)$ boundary, let $Y$ be a compact 3-dimensional manifold with $($possibly empty$)$ boundary, and let $g\colon Y\to N_\sigma$ be a smooth embedding for some $\sigma$ such that $\sigma$ and $\std$ agree near $\partial N$.  Then $g$ is homotopic rel. boundary, via an arbitrarily small homotopy, to a smooth embedding in $N_{\std}$.
\end{proposition}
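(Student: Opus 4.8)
The plan is to use \cref{prop:sigma-pi-agree-away-from-S} to reduce to a situation in which $\sigma$ and $\std$ differ only near finitely many points of $g(Y)$, and then to remove each of those points by a local version of Kervaire's theorem that $2$-knots are smoothly slice. To begin, I would separate the two smooth structures. By \cref{thm:smoothing-thy-main-thm}~\eqref{item:2-smoothing-thy} there is an obstruction $\ks(\sigma,\std)\in H^3(N,\partial N;\Z/2)$. Choose a closed surface $S$, smoothly embedded in $\Int N_\sigma$, whose $\Z/2$-fundamental class is Poincar\'{e}--Lefschetz dual to $\ks(\sigma,\std)$, and perturb $S$ by a small smooth isotopy in $N_\sigma$ to make it transverse to $g(Y)$; then $S\cap g(Y)=\{p_1,\dots,p_k\}$ is a finite subset of $\Int N\cap\Int g(Y)$ and $S$ is still dual to $\ks(\sigma,\std)$. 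Now apply \cref{prop:sigma-pi-agree-away-from-S}: there is an arbitrarily small isotopy of $\sigma$, supported away from $S$ and $\partial N$, to a smooth structure $\sigma'$ that agrees with $\std$ on $N\sm S$. Carrying $g$ along this isotopy replaces it, up to an arbitrarily small homotopy rel.\ $\partial Y$, by a smooth embedding $g\colon Y\to N_{\sigma'}$; since the isotopy is the identity near $S$ and near $\partial N$, we still have $S\cap g(Y)=\{p_1,\dots,p_k\}$ and $g$ is still smooth in $\std$ near $\partial Y$. The key observation is that $g(Y)\sm\{p_1,\dots,p_k\}\subseteq N\sm S$, where $\sigma'=\std$, so $g(Y)$ is smooth in $\std$ away from the points $p_i$.

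Next I would localise. Since $g$ is locally flat it has a topological normal $D^2$-bundle \cite{KS-normal-bundles-codim-2}, which may be taken smooth in $\std$ over $g(Y)\sm\{p_1,\dots,p_k\}$. Using this bundle, transversality, the $\std$-smoothness of $g(Y)$ near each $p_i$ away from $p_i$ itself, and if necessary a further arbitrarily small homotopy of $g$ supported near the $p_i$, one arranges, for each $i$, an arbitrarily small $5$-ball $B_i$ with $p_i\in\Int B_i\subseteq\Int N$, the $B_i$ pairwise disjoint and with $\partial B_i\cong S^4$ smooth in $\std$, such that $g(Y)\cap B_i$ is a locally flat $3$-disc which is smooth in $\std$ on $(g(Y)\cap B_i)\sm\{p_i\}$ and meets $\partial B_i$ in a smoothly embedded $2$-knot $K_i\cong S^2$. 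This reduces the problem to $k$ copies of the following local problem: modify $g(Y)\cap B_i$, a topologically flat $3$-disc that is singular in $\std$ only at its centre $p_i$, rel.\ $\partial B_i$ and by a small homotopy, to a $3$-disc that is smooth in $\std$.

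Each local problem is solved by Kervaire's theorem that every $2$-knot is smoothly slice \cite{Kervaire-slice-knots}: the smooth $2$-knot $K_i\subseteq\partial B_i\cong S^4$ bounds a smoothly embedded $3$-disc $\Delta_i$ in $B_i\cong D^5$, and I would arrange $\Delta_i$ to coincide with $g(Y)\cap B_i$ on a collar of $\partial B_i$ so that gluing it in creates no corner; this replacement, and the verification that it can be carried out compatibly, is precisely the adding and removing of local knots analysed by Sunukjian \cite{Sunuk}. Replacing $g(Y)\cap B_i$ by $\Delta_i$ for every $i$ yields an embedding $g'$ whose image agrees with $g(Y)$ outside $\bigcup_i B_i$, where it is $\std$-smooth, and equals $\Delta_i$ inside each $B_i$, where it is $\std$-smooth, the two descriptions matching smoothly across $\partial B_i$; hence $g'(Y)$ is a smooth submanifold of $N_{\std}$, and, re-parametrising using the unique smooth structure on $Y$, we may take $g'$ to be a smooth embedding into $N_{\std}$.

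It remains to see that $g'$ is obtained from $g$ by an arbitrarily small homotopy rel.\ $\partial Y$. For each $i$ the two maps $D^3\cong g^{-1}(B_i)\to B_i\cong D^5$ given by $g$ and by a parametrisation of $\Delta_i$ agree on $\partial D^3$, hence are homotopic rel.\ boundary; splicing these homotopies with the constant homotopy outside $\bigcup_i B_i$ exhibits a homotopy $g\heq g'$ rel.\ $\partial Y$ supported in the arbitrarily small set $\bigcup_i B_i$. Concatenating with the homotopy from the first step, allocated a fixed error budget, gives the desired arbitrarily small homotopy rel.\ $\partial Y$ from the original embedding to a smooth embedding in $N_{\std}$; together with \cref{prop:exotic-embedding} this proves \cref{thm:main-intro}. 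The step I expect to be the main obstacle is the localisation: producing the clean local model around each $p_i$ requires handling with care the purely topological local structure of the locally flat embedding, and it is only after this is in place that the local problem becomes exactly the one resolved by Kervaire's argument in the form systematised by Sunukjian; the remaining work is transversality and keeping track of the sizes of all the homotopies involved.
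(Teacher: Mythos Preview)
Your overall strategy matches the paper's: use \cref{prop:sigma-pi-agree-away-from-S} to isolate the difference between $\sigma$ and $\std$ to a surface $S$ meeting $g(Y)$ transversely in finitely many points, and then resolve each intersection point by a local slice-disc argument. The place where your argument diverges from the paper, and where it has a genuine gap, is exactly the step you flag as ``the main obstacle'': producing the clean local model.

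You assert that around each $p_i$ one can find a $\std$-smooth $5$-ball $B_i$ with $g(Y)\cap B_i$ a locally flat $3$-disc and $g(Y)\cap\partial B_i$ a smooth $2$-sphere. This is not justified. A $\std$-coordinate ball around $p_i$ meets $g(Y)$, after making its boundary transverse, in some compact $3$-manifold with boundary; but the distance-to-$p_i$ function on $g(Y)$ is only continuous at $p_i$ (since $g(Y)$ is not $\std$-smooth there), so there is no Morse-theoretic reason for small level sets to be $2$-spheres. Equivalently, in a topological local-flatness chart the $\std$-ball corresponds to an arbitrary topological $5$-ball, whose intersection with $\R^3\times\{0\}$ need not be a disc. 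Without this, you cannot invoke Kervaire's theorem, which requires the ambient pair to be the standard $(D^5,S^4)$.

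The paper sidesteps this entirely. Rather than seeking a standard $\std$-ball, it takes the normal bundle of the $\sigma$-smooth $3$-disc $D^3_i=g(Y)\cap\ol{\nu}S$ inside $\ol{\nu}S$, obtaining $V_i\cong D^3\times\R^2$ topologically with $g(Y)\cap V_i=D^3\times\{0\}$ and $T_i=S^2\times\{0\}$ automatically a $2$-sphere. The price is that $(V_i,\partial V_i)$ with the $\std$ structure may not be diffeomorphic rel.\ boundary to the standard $(D^3\times\R^2,S^2\times\R^2)$, so Kervaire's theorem does not apply. The paper therefore proves a separate lemma (via an obstruction-theory argument mapping to $\CP^\infty$) that $T_i$ bounds a compact orientable $\std$-smooth $3$-manifold $Z_i\subseteq V_i$, and then uses Sunukjian's ambient-surgery argument in full---not merely as a compatibility check---to convert $Z_i$ to a $3$-disc. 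Your appeal to Kervaire would need either a nontrivial argument identifying $V_i$ with a standard ball, or to be replaced by this Seifert-surface-plus-surgery route.
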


To begin, recall that the structures $\sigma$ and $\std$ correspond via smoothing theory (Theorem~\ref{thm:smoothing-thy-main-thm}) to two lifts $\sigma, \std\colon N \to \BO$ of $\tau_{N} \colon N \to \BTOP$. The difference between these lifts gives rise to a map $N \to \TOP/\OO$, and whence to an element  $\ks(\sigma,\std) \in H^3(N,\partial N ; \Z/2) \cong H_2(N;\Z/2)$. In this section we redefine $M:= g(Y)$

\begin{lemma}\label{lemma:represent-ks-by-S}
  The class $\PD(\ks(\sigma,\std)) \in H_2(N;\Z/2)$ can be represented by a closed surface $S \subseteq \Int N$, which is smoothly embedded in $\sigma$ and is transverse to $M:= g(Y)$.
\end{lemma}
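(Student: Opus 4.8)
The plan is to first produce a smoothly embedded surface in $N_\sigma$ representing the mod-2 homology class $\PD(\ks(\sigma,\std))$, and then to make it transverse to $M = g(Y)$ by a small smooth isotopy; since $\dim S + \dim M = 2 + 3 = 5 = \dim N$, transversality will force the intersection to be a finite set of points in $\Int N$.

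First I would recall that for a closed smooth $5$-manifold, or a compact one with boundary, every class in $H_2(-;\Z/2)$ is represented by a smoothly embedded closed surface. The standard argument: realize the class $\PD(\ks(\sigma,\std)) \in H_2(N;\Z/2)$ by a map $\Sigma \to N$ from a closed surface (by the classical fact that $H_2(-;\Z/2)$ is generated by images of fundamental classes of surfaces, e.g.\ via Thom's work, or by taking the $2$-skeleton of a CW/handle structure), push the image into $\Int N$ (possible since $N$ is connected of dimension $5 > 2$), then apply general position / Whitney's embedding theorem in the smooth manifold $N_\sigma$: a generic smooth map of a surface into a smooth manifold of dimension $\geq 5$ is an embedding, because $2 \cdot 2 = 4 < 5$. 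This yields a closed surface $S \subseteq \Int N$, smoothly embedded with respect to $\sigma$, with $[S]_{\Z/2} = \PD(\ks(\sigma,\std))$.

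Next I would arrange transversality to $M$. Since $g\colon Y \to N_\sigma$ is a smooth embedding and $S$ is a smooth submanifold of $N_\sigma$, smooth transversality (the parametrized/relative transversality theorem) gives an arbitrarily small smooth isotopy of $N_\sigma$, which we may take to be supported in $\Int N$ away from $\partial N$ and away from a neighbourhood of $\partial M$, moving $S$ to a surface transverse to $M$. The isotopy does not change the $\Z/2$-homology class, so the resulting surface still represents $\PD(\ks(\sigma,\std))$. Transversality of a $2$-manifold and a $3$-manifold inside a $5$-manifold means $S \cap M$ is a smooth $0$-manifold, hence a finite set of points (finite since both $S$ and $M$ are compact), and these points lie in $\Int N$ because $S \subseteq \Int N$.

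I do not anticipate a serious obstacle here: the only mild point to be careful about is that all constructions take place in the smooth manifold $N_\sigma$ (not $N_{\std}$), so that "smoothly embedded" and "transverse" are meant with respect to $\sigma$; this is exactly what the statement asks for, and it is consistent with the hypothesis that $g$ is smooth into $N_\sigma$. A secondary point is ensuring the surface and the transversality isotopy stay away from $\partial N$, which is immediate since we can push everything into the interior at the start and keep subsequent isotopies supported in a compact subset of $\Int N$.
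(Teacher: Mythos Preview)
Your proposal is correct and follows essentially the same route as the paper: represent the $\Z/2$-class by a map from a closed surface, smoothly approximate and put in general position in $N_\sigma$ to obtain an embedding (using $2\cdot 2<5$), then make it transverse to $M$. The only cosmetic difference is that the paper justifies the representability of $H_2(N;\Z/2)$ by surfaces via the Atiyah--Hirzebruch spectral sequence for unoriented bordism $\mathcal{N}_2(N)$, whereas you invoke Thom's theorem or a $2$-skeleton argument directly; these amount to the same fact.
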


\begin{proof}
We consider the group $\mathcal{N}_2(N)$ of unoriented surfaces mapping to $N$, up to bordism.  The Atiyah-Hirzebruch spectral sequence for this has $E^2$-page
\[E^2_{p,q} \cong H_p(N;\mathcal{N}_q).\]
The unoriented bordism groups are given~\cite{Thom-cobordism} in the range $q \in \{0,1,2\}$ by $\mathcal{N}_0 \cong \Z/2 \cong \mathcal{N}_2$, and $\mathcal{N}_1=0$.  Using that the $q=1$ row on the $E^2$-page consists entirely of zeros, we have an exact sequence
\[H_3(N;\Z/2) \xrightarrow{d^3_{3,0}} H_0(N;\Z/2) \to\mathcal{N}_2(N) \to H_2(N;\Z/2) \to 0.\]
In particular every element of $H_2(N;\Z/2)$ lifts to $\mathcal{N}_2(N)$, and so can be represented by a map~$h \colon \Sigma \to N$ from some closed surface~$\Sigma$ into~$N$.

By \cite{Hirsch-diff-top}*{Theorem~2.2.6} we can approximate $h$ by a smooth map in $[N]_{\sigma}$, and by \cite{Hirsch-diff-top}*{Theorems~2.2.12~and~2.2.14} we can approximate the result by an embedding, $h' \colon \Sigma \to N$. We write~$S:= h'(\Sigma)$. Since both $S$ and $M$ are smooth in $\sigma$, we  apply transversality to complete the proof.
\end{proof}

By Proposition~\ref{prop:sigma-pi-agree-away-from-S}, by an arbitrarily small isotopy of $\sigma$ away from $S$ and $\partial N$,
and hence of~$M \cap (N \sm S)$, we can assume that the smooth structures $\sigma$ and $\std$ agree in the complement of  the surface $S$.  Replace $M$ and $\sigma$ by the outcomes of this isotopy.

Let $\nu S$ denote a smooth open tubular neighbourhood of $S$ in the smooth structure $\sigma$.
We have that $M\sm \nu S$ is smooth in $[N \sm \nu S]_{\std}$.
By compactness and transversality, $S \pitchfork M$ consists of finitely many points, $p_1,\dots,p_n$ say. Moreover, the intersection $M \cap \partial \ol{\nu} S$ consists of a copy of~$S^2$ for each point $p_i \in S \pitchfork M$, which bounds a 3-ball $D^3_i \subseteq M \cap \ol{\nu} S$ with the centre of~$D^3_i$ equal to~$p_i$.
In fact the intersection $M \cap \ol{\nu} S$ comprises exactly $\bigcup_{i=1}^n D^3_i$;  the~$D^3_i$ are pairwise disjoint.

 Since $D^3_i$ is locally flat and codimension 2, it has a normal bundle~\cite{KS-normal-bundles-codim-2}.
We take a normal bundle of each $D^3_i$ in $\ol{\nu} S$.
We obtain an inclusion of pairs
\[(D^3_i \times \R^2,S^2 \times \R^2) \subseteq (\ol{\nu} S, \partial \ol{\nu} S).\]
Pull back the smooth structure $\std$ to this to obtain
\[V_i := [D^3_i \times \R^2]_{\std}. \]
This $V_i$ is a smooth manifold that is homeomorphic to $D^3 \times \R^2$, with boundary $\partial V_i$ identified with $S^2 \times \R^2$ with its usual product smooth structure.
In the boundary, $M \cap \partial V_i$ is a smooth 2-sphere $T_i$ that is identified with $S^2 \times \{0\} \subseteq S^2 \times \R^2$.

We remark that $(V_i,\partial V_i)$ may not be diffeomorphic rel.\ boundary to $(D^3 \times \R^2,S^2 \times \R^2)$. In addition while $M \cap V_i$ is smooth in $\sigma$, this need not be the case in $\std$.

\begin{lemma}\label{lemma:finding:Y-s}
  The 2-sphere $T_i \subseteq \partial V_i$ bounds a compact, orientable 3-manifold $Z_i$ smoothly embedded in $V_i = [D^3_i \times \R^2]_{\std}$.
\end{lemma}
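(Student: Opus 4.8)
The statement asks us to find a smooth (in the structure $\std$) compact orientable $3$-manifold $Z_i \subseteq V_i$ with boundary the $2$-sphere $T_i \subseteq \partial V_i$. The natural first move is to forget $\std$ for a moment and observe that, since $M = g(Y)$ is a locally flat proper submanifold of $N$, the piece $M \cap V_i = D^3_i$ is a locally flat $3$-ball, hence $T_i = \partial D^3_i$ already bounds a locally flat $3$-manifold in $V_i$, namely $D^3_i$ itself — but that one is smooth only in $\sigma$, not in $\std$, so it does not immediately serve. Instead I would argue homologically: $T_i$ represents a class in $H_2(\partial V_i;\Z)$, and since $\partial V_i \cong S^2 \times \R^2 \simeq S^2$ with $H_2(\partial V_i;\Z)\cong\Z$ generated by $S^2\times\{0\}$, the class $[T_i]$ is a generator. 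The key point is whether this class dies in $H_2(V_i;\Z)$. Since $V_i$ is homeomorphic to $D^3\times\R^2 \simeq S^2$, we have $H_2(V_i;\Z)\cong\Z$ as well, and the inclusion $\partial V_i \hookrightarrow V_i$ is a homotopy equivalence, so naively $[T_i]$ does \emph{not} bound in $V_i$. The resolution is that $[T_i]$ is the homology class of $M\cap\partial V_i$, whose interior part $M\cap V_i = D^3_i$ exhibits a null-homology: $[T_i] = \partial[D^3_i]$ in $H_*(V_i,\partial V_i)$-relative terms, so $[T_i]=0$ in $H_2(V_i,\partial V_i;\Z)$, equivalently $[T_i]$ lies in the image of $H_2(\partial V_i)\to H_2(V_i)$ being... let me restate: the correct statement is that $[T_i]$ maps to $0$ under $H_2(\partial V_i;\Z)\to H_2(V_i;\Z)$ precisely because $T_i = \partial D^3_i$ and $D^3_i$ is a singular $3$-chain in $V_i$ with boundary $T_i$. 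So indeed $[T_i]=0\in H_2(V_i;\Z)$.

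Given that $[T_i]=0\in H_2(V_i;\Z)$, the plan is to realize this null-homology by a smoothly embedded $3$-manifold. I would proceed as in a standard Seifert-surface-type argument: the vanishing of $[T_i]$ in $H_2(V_i;\Z)$ means the dual class in $H^3_c(V_i;\Z)$ (or via Alexander/Lefschetz duality with the $5$-manifold $V_i$) is trivial, and more concretely that the classifying map $V_i \to K(\Z,3) = \CP^\infty$... no — better to work directly. Since $T_i$ is a smoothly embedded oriented $2$-sphere in the smooth manifold $\partial V_i \subseteq V_i$ (smooth in $\std$ now, since $\partial V_i \subseteq \partial\ol{\nu}S$ lies in the region where $\sigma$ and $\std$ agree — this is exactly where we used Proposition \ref{prop:sigma-pi-agree-away-from-S}), and since it is null-homologous in $V_i$, I invoke the fact that an oriented codimension-$3$ closed submanifold of a smooth manifold which is null-homologous bounds a compact oriented submanifold with trivial normal bundle: dually, $[T_i]=0$ means its Poincaré–Lefschetz dual in $H^3(V_i,\partial V_i\sm T_i;\Z)\cong[V_i, \partial V_i\sm T_i; K(\Z,3)]$... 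The cleanest route is oriented bordism: the pair $(T_i\hookrightarrow V_i)$ gives a class in the relative bordism group $\Omega^{SO}_2(V_i,\partial V_i)$-type object, and since $T_i=S^2$ is a bordism-trivial surface and $[T_i]=0$ in homology, with $H_3(V_i;\Z)=0$ controlling the relevant Atiyah–Hirzebruch differential, the class of $T_i$ is null-bordant, which furnishes a compact oriented $3$-manifold mapping into $V_i$ with boundary $T_i$; then approximate (relative to the already-smooth boundary $T_i$) by a smooth map and then by a smooth embedding using Whitney, exactly as in the proof of Lemma \ref{lemma:represent-ks-by-S}.

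The main obstacle, and the step I would be most careful about, is the embedding/transversality step in the $5$-dimensional ambient $V_i$: we need the bounding $3$-manifold to be \emph{embedded}, not merely immersed, which is fine since $2\cdot 3 = 6 > 5$ is false — actually $\dim Z_i + \dim Z_i = 6 > 5 = \dim V_i$, so general position does \emph{not} automatically remove self-intersections of a $3$-manifold in a $5$-manifold. This is the genuine difficulty. I expect the paper handles it by the same device as Sunukjian: work first in general position to get a generic immersion with isolated double points, then note that a map from an oriented surface-bounded $3$-manifold can be altered (piping, or using that the double points come in algebraically cancelling pairs together with the Whitney trick, which \emph{is} available here because $V_i$ is simply connected and $\dim V_i = 5$ — the Whitney trick works for embedded Whitney discs in dimension $\geq 5$). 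So the real content is: reduce to $[T_i]=0\in H_2(V_i;\Z)$, realize by an immersed $3$-manifold, then clean up self-intersections via the $5$-dimensional Whitney trick in the simply-connected manifold $V_i$, all done relative to the fixed smooth boundary sphere $T_i$. I would also double-check orientability of $Z_i$: since $V_i$ is (homeomorphic to, hence, being smooth, diffeomorphic on the relevant $w_1$-level) parallelizable-ish and $T_i$ is an oriented sphere, we can choose $Z_i$ oriented compatibly, and this is where the hypothesis that $Z_i$ be merely \emph{orientable} (rather than, say, controlling its diffeomorphism type) keeps the argument flexible.
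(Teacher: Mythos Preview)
Your approach has a genuine gap in the embedding step. You correctly observe that $2 \cdot 3 > 5$, so a generic map $Z_i^3 \to V_i^5$ is not an embedding, but then you assert the self-intersection locus consists of ``isolated double points'' and propose to cancel them with the Whitney trick. In fact, for a generic immersion of a $3$-manifold in a $5$-manifold the double-point set has dimension $2 \cdot 3 - 5 = 1$: it is a collection of circles and arcs, not isolated points. The Whitney trick---which cancels isolated transverse intersection points in pairs via embedded Whitney discs---simply does not apply in this dimension range. Removing double \emph{circles} is a different and harder problem; one can sometimes pipe or do ambient surgery on $Z_i$, but that changes the topology of $Z_i$ and requires its own argument, which you do not supply.

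The paper sidesteps this entirely via a Pontryagin--Thom/Seifert-hypersurface construction, which is precisely the ``classifying map to $\CP^\infty$'' idea you raise and then discard (note $\CP^\infty \simeq K(\Z,2)$, not $K(\Z,3)$). Since $T_i = S^2 \times \{0\} \subseteq S^2 \times \R^2 = \partial V_i$, the projection $S^2 \times D^2 \to D^2 \hookrightarrow S^2 = \CP^1 \subseteq \CP^2 \subseteq \CP^\infty$ gives a map $f$ with $f^{-1}(\ell(\CP^1)) = T_i$ for a suitable transverse line $\ell$. One then extends $f$ over $V_i' := D^3 \times D^2$: the unique obstruction lies in $H^3(D^3, S^2; \pi_2(\CP^\infty)) \cong \Z$ and vanishes because $f(T_i)$ is a single point. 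Homotope the extension into $\CP^2$, make it smooth in $\std$ and transverse to $\ell(\CP^1)$; the preimage $Z_i$ is then \emph{automatically} a smoothly embedded compact $3$-manifold with $\partial Z_i = T_i$---no Whitney moves needed. Orientability follows since $w_1(\nu Z_i)$ is pulled back from $w_1(\nu\CP^1) = 0$.

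A side remark: $V_i \cong D^3 \times \R^2$ is contractible, not homotopy equivalent to $S^2$, so the vanishing $[T_i] = 0 \in H_2(V_i;\Z)$ is immediate and the meandering in your first paragraph is unnecessary.
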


\begin{proof}
Let $g \colon V_i \xrightarrow{\cong} D^3 \times \R^2$ be the homeomorphism from the definition of $V_i$, which is a diffeomorphism near $\partial V_i$, and let
$V_i' := g^{-1}(D^3 \times D^2)$ be the corresponding closed disc bundle.
 Consider the sequence of maps
  \[f \colon  V_i \xrightarrow{g} D^3 \times \R^2 \xrightarrow{\pr_2} \R^2.\]
Then $f$ is smooth near $\partial V_i$, and $f|_{\partial V_i}^{-1}(\{0\}) = T_i$. Also note that $f^{-1}(0) = g^{-1}(D^3 \times \{0\}) \subseteq V_i'$.

Perturb $f$ rel.\  a neighbourhood of $\partial V_i$ so it becomes smooth and transverse to $0 \in \R^2$.
By making the perturbation sufficiently small, we can and will assume that $Z_i := f^{-1}(0)$ still lies in~$V_i' \subseteq V_i$.

We have a smooth 3-manifold $Z_i \subseteq V_i' \subseteq V_i$ with boundary $T_i \subseteq \partial V_i$.   As the inverse image of a closed set, $Z_i$ is closed. Since  $Z_i \subseteq V_i'$ and~$V_i'$ is compact, $Z_i$ is compact.  Since $V_i$ is orientable, $0 = w_1(V_i) = w_1(Z_i) + w_1(\nu Z_i)$, and hence $w_1(Z_i) = w_1(\nu Z_i)$.  However~$w_1(\nu Z_i)=0$ because~$\nu Z_i$ can be obtained as the pull back of the (trivial) normal bundle of $\{0\} \subseteq \R^2$. It follows that~$Z_i$ is orientable.  We have therefore  constructed the  compact, orientable, smoothly embedded 3-manifold $Z_i \subseteq V_i$ we desire.
\end{proof}

Now we can prove the Proposition~\ref{prop:smooth-embedding-body}, which is the goal of this section.

\begin{proof}[Proof of Proposition~\ref{prop:smooth-embedding-body}]
  To prove the proposition, it remains to find, for each $i=1,\dots,n$, a smooth slice disc $D^3 \subseteq V_i$ with $\partial D^3 = T_i = S^2 \times \{0\} \subseteq S^2 \times \R^2 = \partial V_i$.  By Lemma~\ref{lemma:finding:Y-s}, for each $i$ we have a smooth, compact, orientable 3-manifold $Z_i$ with $\partial Z_i = T_i$. Since $Z_i$ is orientable and 3-dimensional, it is parallelisable, and thus is in particular spin. We now apply the argument of Sunukjian~\cite{Sunuk} from his Section 5 and the proof of his Theorem 6.1. As mentioned in the introduction, this is similar to and was inspired by Kervaire's theorem~\cite{Kervaire-slice-knots} that every 2-knot is slice.  For the convenience of the reader we give an outline here.

  First perform ambient 1-surgeries on $Z_i$ to arrange that $\pi_1(V_i \sm Z_i)$ is cyclic.   By \cite{Sunuk}*{Proposition~5.1~and~Lemma~5.2}, there is a spin structure on $Z_i$ such that every spin structure preserving surgery on $Z_i$ can be performed ambiently.  Here we use that $\pi_1(V_i \sm Z_i)$ is cyclic, so that every circle in $Z_i$ bounds an embedded 2-disc whose interior lies in $V_i \sm Z_i$.
Using this spin structure,  the union $Z_i \cup D^3$ is a closed, smooth, spin 3-manifold.  The group $\Omega_3^{\operatorname{Spin}} =0$, so $Z_i \cup D^3$ is spin null-bordant.  By \cite{Sunuk}*{Lemma~5.4}, there is a sequence of spin structure compatible surgeries on circles in  $Z_i$ that convert it to $D^3$.  Perform these surgeries ambiently, and obtain a smoothly embedded $D^3 \subseteq [V_i]_{\std}$, as desired, in the restriction to $V_i$ of the smooth structure~$\std$.

Replacing $M \cap V_i$ with this 3-ball, for each $i$, yields a smooth embedding $g'\colon Y\hookrightarrow N$ in the smooth structure $\std$.
Since $V_i$ is contractible, $\pi_3(V_i,T_i) \cong \pi_2(T_i) \cong \Z$, and so there is a unique homotopy class of a 3-ball in $V_i$ with boundary mapping to $T_i$ via a homeomorphism.  Thus we changed $g$ by a homotopy.
By making the $V_i$ as small as we please,  we can further arrange that we changed $g$ by an arbitrarily small homotopy.
\end{proof}

As mentioned above, Propositions~\ref{prop:exotic-embedding} and \ref{prop:smooth-embedding-body} combine to complete the proof of Theorem~\ref{thm:main-intro}, noting that in both cases all the modifications we made to the embedding, from $f$ to $g$ to $g'$, consisted of local homotopies or isotopies, in all cases supported outside a neighbourhood of $\partial N$.

\section{Conditions for smoothing up to isotopy}\label{section:Jae-choon-suggestions}

As shown by Lashof's 3-knot~\cite{Lashof} (Section~\ref{section:lashofs-knot}), it is not in general possible to isotope a locally flat embedding of a 3-manifold to a smooth embedding. Our main result shows this is possible with an arbitrarily small homotopy.
Here we discuss the extent to which smoothing up to isotopy is possible.

As above let  $Y=Y_1\sqcup \cdots \sqcup Y_m$ be a compact 3-manifold with connected components $Y_i$, and let~$N$ be a compact, connected, smooth 5-manifold.
We will use the Kirby-Siebenmann invariant $\ks(W_f,\partial W_f) \in H^4(W_f,\partial W_f;\Z/2)$ of the exterior $W_f := N \sm \nu f(Y)$, and we will use the relative Kirby-Siebenmann invariant $\ks(\sigma,\std) \in H^3(N,\partial N;\Z/2)$ comparing the smooth structure $\sigma$ on $N$ arising from Step 1 (Proposition~\ref{prop:exotic-embedding}) with the given smooth structure $\std$ on $N$.  These invariants were recalled in detail in Section~\ref{section:smoothing-theory}.
In practice, these invariants are not always easy to evaluate. One way to do this for \eqref{it:i-smoothing-up-to-isotopy}  could be to use the ideas of Kwasik and Vogel~\cites{Kwasik-Vogel,Kwasik-nonsmoothable} discussed in Section~\ref{section:lashofs-knot} to relate $\ks(W_f,\partial W_f)$ to the signature of an appropriate 4-manifold.

\begin{scholium}\label{schol:isotopy-criteria}
Let $f\colon Y\to N$ be a locally flat proper topological embedding that is smooth near~$\partial Y$.
\begin{enumerate}[(i)]
  \item\label{it:i-smoothing-up-to-isotopy} If $\ks(W_f,\partial W_f)=0$ then there exists a smooth structure $\sigma$ on $N$ with respect to which $f$ is smooth.
  \item If in addition $\langle \ks(\sigma,\std),[f(Y_i)]\rangle = 0 \in \Z/2$ for each connected component $Y_i$ of $Y$, then $f$ is topologically isotopic rel.\ boundary, via an arbitrarily small isotopy, to a smooth embedding.
\end{enumerate}
\end{scholium}

\begin{proof}
If $\ks(W_f,\partial W_f)=0$, then Step 1 (Proposition~\ref{prop:exotic-embedding}) can be completed without connect summing with any Lashof knots. We obtain a smooth structure $\sigma$ on $N$ in which $f$ is smooth, that agrees with the standard smooth structure on $N$ near $\partial N$. Now suppose $\langle \ks(\sigma,\std),[f(Y_i)]\rangle = 0$ for each $i=1,\dots,m$. Let~$S$ be an embedded surface Poincar\'{e} dual to $\ks(\sigma,\std)$ that intersects~$f(Y)$ transversely (such an $S$ was produced in Lemma~\ref{lemma:represent-ks-by-S}). The condition implies, by intersection theory, that for each~$i$ the count of transverse intersection points between $S$ and~$f(Y_i)$ is even. For every~$i$, tube~$S$ to itself, along  $f(Y_i)$, to obtain a new surface $S'$, in the same $\Z/2$-homology class, $[S] = [S'] \in H_2(N;\Z/2)$, and such that $S' \cap f(Y) = \emptyset$. It then follows from Proposition~\ref{prop:sigma-pi-agree-away-from-S} that $f$ is isotopic to a smooth embedding in $\std$.
\end{proof}

\def\MR#1{}
\bibliography{bib}

\end{document}